\newtheorem{theorem}{Theorem}[section]
\newtheorem{lemma}[theorem]{Lemma}
\newtheorem{corollary}[theorem]{Corollary}
\newtheorem{proposition}[theorem]{Proposition}
\newtheorem{definition}[theorem]{Definition}
\newtheorem{algorithm}[theorem]{Algorithm}
\newtheorem{remark}[theorem]{Remark}
\newcommand{\opA}{\mathcal{A}}
\newcommand{\opB}{\mathcal{B}}
\newcommand{\opPP}{\mathcal{P}} % orthogonal projection
\newcommand{\opL}{\mathfrak{L}}
\newcommand{\opR}{\mathfrak{R}}
\newcommand{\opP}{\mathfrak{P}}
\newcommand{\opI}{\mathfrak{I}}
\newcommand{\opW}{\mathfrak{W}}
\newcommand{\opX}{\mathfrak{X}}
\newcommand{\opE}{\mathrm{E}}
\newcommand{\X}{\mathcal{X}}
\newcommand{\Y}{\mathcal{Y}}
\newcommand{\Z}{\mathcal{Z}} % for input z of DIP net
\newcommand{\R}{\mathbb{R}}
\newcommand{\N}{\mathbb{N}}
\newcommand{\D}{\mathbb{D}}
\renewcommand{\S}{\mathbb{S}}
\newcommand{\E}{\mathbb{E}}
\newcommand{\Sb}{\mathbf{s}}
\newcommand{\Db}{\mathbf{d}}
\newcommand{\Xb}{\mathbf{x}}
\newcommand{\Yb}{\mathbf{y}}
\newcommand{\Zb}{\mathbf{z}}
\begin{document}

\title[]{Imaging based on Compton scattering: model uncertainty and data-driven reconstruction methods}

\author{Janek Gödeke$^\dag$ and Ga\"el Rigaud$^\ddag$}
\address{$^\dag$ Zentrum f\"ur Technomathematik, University of Bremen, Germany\\
$^\ddag$ Department of Mathematics, University of Stuttgart, Germany}
\ead{gael.rigaud@mathematik.uni-stuttgart.de}
\vspace{10pt}
\begin{indented}
\item[]January 2022
\end{indented}

% REQUIRED
\begin{abstract}
The recent development of scintillation crystals combined with $\gamma$-rays sources opens the way to an imaging concept based on Compton scattering, namely Compton scattering tomography (CST). The associated inverse problem rises many challenges: non-linearity, multiple order-scattering and high level of noise. Already studied in the literature, these challenges lead unavoidably to uncertainty of the forward model. This work proposes to study exact and approximated forward models and develops two data-driven reconstruction algorithms able to tackle the inexactness of the forward model. The first one is based on the projective method called regularized sequential subspace optimization (RESESOP). We consider here a finite dimensional restriction of the semi-discrete forward model and show its well-posedness and regularisation properties. The second one considers the unsupervised learning method, deep image prior (DIP), inspired by the construction of the model uncertainty in RESESOP. The methods are validated on Monte-Carlo data.   
\end{abstract}

\textbf{Keywords:} model uncertainty, Compton scattering tomography, sequential
subspace optimization, deep image prior

%%%%%%%%%%%%%%%%%%

\section{Introduction}

\par
At first a tool for visualizing the inside of the human body using X-rays by the upcoming of
Computerized Tomography (CT), the need for imaging affects nowadays astrophysics, homeland security, landscape and environment monitoring and of course manufacturing processes to cite only a few. This success is made possible by the technological progress in terms of detection -- cameras, crystals, etc -- but also in terms of computing and storage capacities. \\

\par 
Computerized Tomography (CT) is a well-established and widely used technique which images an object by exploiting the properties of penetration of the x-rays. Due to the interactions of the photons with the atomic structure, the matter will resist the propagation of the photon beam of energy $E$ and intensity $I(\mathbf{x})$ according to the well-known Beer-Lambert law 
\begin{equation}\label{eq:BeerLambert}
I(\mathbf{y}) = I(\mathbf{x}) e^{-\int_{\mathbf{x} \to \mathbf{y}} \mu} ,
\end{equation}
where $\mu$ stands for the lineic attenuation coefficient and $\mathbf{x} \to \mathbf{y}$ denotes the straight line $\{\mathbf{x}+t(\mathbf{y}-\mathbf{x}), \ t\in [0,1]\}$. To interpret the measurement of the intensity in a CT-scan is then possible with the help of the Radon transform in 2D and the X-ray transform in 3D, which maps the attenuation map $\mu(x)$ into its line integrals, \textit{i.e.}
\begin{equation}\label{eq:Radontransform}
\ln \frac{I(\mathbf{s},\theta)}{I(\mathbf{d},\theta)} = \opR \mu (p,\theta) = \int_\Omega \mu(x) \delta(p - x\cdot \theta) \mathrm{d}x
\end{equation}
with $(p,\theta)  \in \mathbb{R} \times S^1$ and where $\mathbf{s}$ and $\mathbf{d}$ stand for the position of the source and of the detection point. We refer to \cite{Natterer} for more information. \\

\par The energy constitutes an important variable made accessible by the recent development of scintillation crystals and semi-conductors detectors \cite{Knoll}. Currently the energy is exploited in multi-spectral CT as a supplementary variable split into several channels delivering a precious information on the attenuation coefficient at different energy levels. We refer to \cite{AM_76,PRLYM_2009,Shefer2013,TM_2015,MLLF_2015,GG_2017,Fredenberg_2018}. However the recently achieved energy resolution, more precisely the FWHM, of the current scintillation crystals opens the way to consider the energy as a reliable dimension along with viewpoints and detector positions. In particle physics, the question of the energy intersects with Compton scattering. Indeed, when one focuses on the physics between the matter and the photons, four types of interactions come out: Thomson-Rayleigh scattering, photoelectric absorption, Compton scattering and pair production. In the classic range of applications of the x-rays or $\gamma$-rays, $[50,1000]$ keV, the photoelectric absorption and the Compton scattering are the dominant phenomena which leads to a model for the lineic attenuation factor due to Stonestrom et al. \cite{Stonestrom} which writes
\begin{equation}\label{eq:stonestrom}
\mu(\mathbf{x},E) = E^{-3} \lambda_{PE}(\mathbf{x}) + \sigma(E) f(\mathbf{x})
\end{equation} 
where $\lambda_{PE}$ is a factor depending on the materials and symbolizing the photoelectric absorption, $\sigma(E)$ the total-cross section of the Compton effect at energy $E$ and $f$ the electron density (generally noted $n_e$) at $\mathbf{x}$. \\

\par 
The Compton effect stands for the collision of a photon with an electron. The photon transfers a part of its energy $E_0$ to the electron. The electron suffers then a recoil and the photon is then scattered of an (scattering) angle $\omega$ with the axis of propagation. The energy of the photon after scattering is expressed by the Compton formula \cite{Compton_23},
\begin{equation}\label{eq:Compton_formula}
E = \frac{E_0}{1+\frac{E_0}{mc^2}(1-\cos\omega)} =: \opE(\omega),
\end{equation} 
where $mc^2 = 511$ keV represents the energy of an electron at rest.  Measuring accurately the variations of the energy can thus be interpreted as scattering events characterized geometrically by the scattering angle which is the foundation of Compton scattering tomography (CST), see \cite{ABE_11,AHD_90,BC_03,BCGLR_02,CV_73,EMBR_98,FC_71,GKAD_05,Cesareo_02,Hussein,Arendtsz,Guzzardi,HH_2010,Norton_94,Nguyen2011,Nguyen2017,Rigaud_SIIMS_17,WebberLionheart}.  \\

\subsection{Spectral data}

\par Given a monochromatic $\gamma$-ray source $\Sb$ of energy $E_0$ and an energy-resolved detector $\Db$, the illumination of a specimen represented by its attenuation map $\mu$ leads by the Compton effect to a polychromatic response measured at $\Db$. This would also hold for a polychromatic source as studied in \cite{kuger2020joint,KugerRigaud21} but for the sake of simplicity we consider in this work only monochromatic sources. Assuming only Compton scattering and photoelectric absorption events, we can decompose the spectrum $\mathrm{Spec}(E,\Db,\Sb)$ measured at a detector $\Db$ with energy $E$ as follows
\begin{equation}\label{eq:spectrum}
\mathrm{Spec}(E,\Db,\Sb) = \sum_{i\in \N} g_i(E,\Db,\Sb).
\end{equation}
The data $g_i$ stands for the measured radiation without scattering events for $i=0$ and after $i$-scattering events for $i>0$. The ballistic data $g_0$ can be understood as the intensity $I(\Db,\theta)$ in eq. (\ref{eq:Radontransform}). Widely studied in 2D \cite{kuger2020joint} and 3D \cite{Rigaud21}, the first-order scattered radiation can be modeled by weighted circular or toric Radon transform and shares similarities with $g_0$ in particular in terms of mapping properties. More generally, $g_i$, $i\geq 1$, can be seen as a special case of the integral transforms
$$
\opL_i (\mu,\bar{f}) (E,\Db,\Sb) := \int_{\Omega^i} \bar{f} (\Zb) \; k_i(\mu;\Zb,E,\Db,\Sb) \; \mathrm{d}\Zb, \quad \bar{f} = \left(\underbrace{f \otimes \ldots \otimes f}_{i \text{ times}}\right)
$$
with  $k_i(\cdot)$ %\in  \mathcal{E}'(L_2(\Omega),\Omega^i,\E,\D,S)$
a singular kernel, $\Omega \subset \R^d$, $d=2,3$ and $(\E,\D,\S)$ the domain of definition of $(E,\Db,\Sb)$.  The complexity to handle $k_i$ computationally, already for $i=2$ studied in \cite{kuger2020joint,Rigaud21}, combined with its nonlinearity with respect to $f$ (already for the first-order scattering as $\mu$ is a function of $f$), makes the use of multiple-order scattering intractable in practice, at least with the current level of technology.  The exploitation of scattering in imaging is thus extremely challenging at theoretical and computational levels and solving (\ref{eq:spectrum}), \textit{i.e.} finding $f$ from $\mathrm{Spec}$ given the scattering model $\opL_1$, will lead to a large model inexactness.\\

\par Therefore, one needs appropriate reconstruction methods able to tackle this limitation of the model. Two approaches appear suited and are considered in this work. The first one is the RESESOP (regularized sequential subspace optimization) developed in \cite{Blanke_20} for dealing with model inexactness. The principle of this method is to split the inverse problem into subproblems and to relax the solution set for each using stripes instead of hyperplanes. The thickness of the stripes is then controlled by a parameter of model uncertainty. The second approach is the widely used deep image prior (DIP) unsupervised learning technique, which was presented in \cite{Lempitsky2018} for denoising and inpainting problems. The reason to use this approach is twofold: (i) it does not require datasets which are at the moment inexistant for CST, (ii) it provides a very flexible architecture while sharing interesting properties from optimization.  \\

\par Studied in \cite{Rigaud21}, the shape and disposition of the detector array is important to the structure of the forward models. We denote by $\S_{\frac12}^1(d) \subset \R^d$ the half-sphere of dimension $d-1$ and parametrized by angles $(\alpha_1,\ldots,\alpha_{d-1})$. 
We define the set of detector positions defined by $\Sb$ and $t$ as
\begin{equation}\label{eq:def_D}
\D(t) := \left\{\Db = t(\alpha_1)\; \theta(\alpha_1,\ldots,\alpha_{d-1}), \  \theta \in \S_{\frac12}^1(d) \right\}
\end{equation}
with $t$ a smooth function. 
For the implementation, we consider here the case $t(\alpha) = \cos \alpha$ which characterizes the sphere passing through $\mathbf{0}$ and will denote below $\D(\cos)$ by $\D$ for the sake of readibility.\\

\subsection{Outline and contributions}

\par 
The paper is organized as follows. Section \ref{sec:forward} recalls the forward models associated to the first- and second-order scattered part of the spectrum, see \cite{Rigaud21,kuger2020joint}. We study the nonlinearity of the first term and discuss how standard algorithms could be exploited at the cost of large computation costs and favorable prior information. More flexible, a linear approximation of the forward operators leads to interesting mapping properties, see also \cite{Rigaud21,kuger2020joint}, and is suited for reconstruction strategies. Due to the complexity of the second-order part, we focus the inverse problem on the first order part. However, this approximation implies a strong model uncertainty in particular when incorporating the second-order scattering. In order to solve the spectral inverse problem (\ref{eq:spectrum}) with such an inaccuracy, we propose first to adapt in Section \ref{sec:resesop} the RESESOP  method. In \cite{Blanke_20}, the authors proved that the proposed RESESOP-Kaczmarz is a regularization method for the SESOP (sequential subspace optimization) method with exact model, see Theorem \ref{thm:RESESOP_regularization}. The spectral problem is reformulated first as semi-discrete, and then as fully discrete, more precisely we consider a finite dimensional restriction of the solution space. It follows by Corollary \ref{coro:regularization_fixed_j} that the RESESOP method adapted to the fully discrete problem regularizes the semi-discrete one. Furthermore, the constructed solution for the fully-discrete problem converges to the minimum norm solution of the semi-discrete problem for a suitable start iterate, see Theorem \ref{thm:full_semi_regularization}. Inspired by the RESESOP approach, we then derive in Section \ref{sec:DIP} an appropriate loss function for a DIP algorithm. Simulation results are presented in Section \ref{sec:simu} for synthetic data and Monte-Carlo data for the second-order scattered radiation. A conclusion ends the manuscript.

\section{Formulation of the mathematical problem}\label{sec:forward}

As explained in the Introduction, the measured spectrum is the sum of the primary radiation and of the scattered radiation of different orders. From a physical point of view, the lower the energy the lower is the probability of a scattering event. It follows that high-order scattering events, typically $\geq 3$, represent a marginal part of the scattered radiation and by the stochastic nature of the emission of photons will be highly noisy. To reflect this physical point of view, we consider that 
\begin{equation}\label{eq:spectral_IP}
\mathrm{Spec} = g_0 + g_1 + g_2 + \epsilon     
\end{equation}
with $\epsilon$ a noisy perturbation. In this section, we recall the modelling of the first- and second-order scattered radiation, their properties and detail the computation of the spectral data for a specific scanning architecture. The section ends with the presentation of a general reconstruction strategy.

\subsection{The forward problem }

As proven in \cite{RH_2018,Rigaud_SIIMS_17}, the first-order scattered radiation $g_1$ can be modelled by the integration of the electron density $f$ along spindle tori (in 3D) or circular-arcs (in 2D) expressed as
$$
\mathfrak{T}(\omega,\Db,\Sb) = \left\{ \Xb \in \mathbb{R}^d \ : \ \sphericalangle (\Xb-\Sb,\Db - \Xb) =  \omega \right\}, \qquad d=2,3.
$$

\begin{figure}
    \centering
    \includegraphics{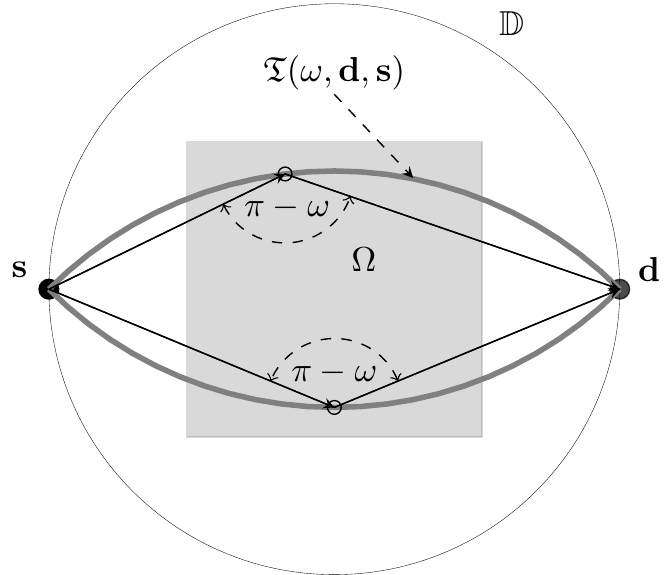}
    \caption{One time scattered photons arriving $\Db$ from $\Sb$ with energy $\mathrm{E}(\omega)$ have been scattered on $\mathfrak{T}(\omega, d,s)$, two opposite circular-arcs in 2D.}
    \label{fig:torus}
\end{figure}
For an illustration of the geometry of a circular-arc see Figure \ref{fig:torus}.
It follows that
\begin{equation}\label{eq:def_L1}\fl 
g_1 \sim \opL_1(\mu,f)(E,\Db,\Sb) := \int_{\Omega} \opW_1(\mu) (\Xb,\Db,\Sb) \ f(\Xb) \ \delta(E-\phi(\Xb,\Db,\Sb)) \ \mathrm{d}\Xb
\end{equation}
where $\opW_1 (\mu)$ quantifies the physical factors (attenuation and photometric dispersion) between $\Sb,\Xb$ and $\Db$, and  $\phi$ stands for the level-set function associated to the inside  (resp. outside) spindle torus when positive (resp. negative) and is given by 
\begin{equation}\label{eq:phase_phi}
\phi(\Xb,\Db,\Sb)  = \opE\left( \cot^{-1} \frac{\kappa(\Xb,\Db,\Sb) - \rho(\Xb,\Db,\Sb)}{\sqrt{1-\kappa^2(\Xb,\Db,\Sb)}} \right)
\end{equation}
where
\begin{equation}\label{eq:def_kappa_rho}
\kappa(\Xb,\Db,\Sb) = \frac{(\Xb-\Sb)}{\Vert \Xb-\Sb \Vert_2 } \cdot \frac{(\Db-\Sb)}{\Vert \Db-\Sb \Vert_2 } 
\quad \text{and} \quad 
\rho(\Xb,\Db,\Sb) = \frac{\Vert \Xb-\Sb \Vert_2}{\Vert \Db-\Sb \Vert_2}
\end{equation}
with $\Vert \cdot\Vert_2$  the euclidean norm.\\

\par Studied in \cite{Rigaud21,kuger2020joint}, the second-order scattered radiation $g_2$ can be represented, akin to $g_1$, by a nonlinear integral transform
\begin{equation}\label{eq:def_L2}\fl 
g_2 \sim \opL_2(\mu,\bar{f})(E,\Db,\Sb) := \int_{\Omega^2} \opW_2(\mu) (\Zb,\Db,\Sb) \ \bar{f}(\Zb) \ \delta(E-\psi(\Zb,\Db,\Sb)) \ \mathrm{d}\Zb
\end{equation}
where $\opW_2 (\mu)$ quantifies the physical factors between $\Sb,\Zb$ and $\Db$ with $\Zb = (\Xb,\Yb)$, $\bar{f} = f \otimes f$ and $\psi$ characterizes the locations of successive first- and second-order scattering events. We refer to \cite{Rigaud21,kuger2020joint} for more details.

\subsection{A look on the nonlinear problem}

\par The operators $\opL_1$ and $\opL_2$ are nonlinear w.r.t. $f$ and $\bar{f}$ respectively but also difficult to handle numerically. The Fr\'echet (or G\^ateaux) derivative is then essential in the construction of reconstruction schemes. Focusing on the first-order scattering, we can compute the corresponding Fréchet derivative where we neglect the photoelectric absorption in $\mu$ in eq. (\ref{eq:stonestrom}), \textit{i.e.} $\mu(\cdot,E) = \sigma(E) f$, and consider, for the sake of simplicity, the operator $\opL_1(f):= \opL_1(\mu,f) : \X\to\Y$, with $\X,\Y$ two suited Hilbert spaces equipped with their respective norms. Furthermore, the weight $\opW_1$ (see \cite{Rigaud21,kuger2020joint} for more details) can then be written as
$$
\opW_1(f) = C  \frac{\exp\left( - \opX f \right)}{\Vert \Xb-\Sb\Vert_2^2 \Vert \Db-\Xb\Vert_2^2 }, \quad C>0
$$
where $\opX$ denotes the X-ray transform applied on the electron density $f$ along the scattering path $\Sb$ to $\Xb$ and $\Xb$ to $\Db$ (see eq. (\ref{eq:BeerLambert})).

\begin{theorem}\label{thm:L1_Frechet}
$\opL_1$ is Fréchet-differentiable with
$$
(\opL_1)_f^\prime h(E,\Db,\Sb) = \int_{\Omega} \left[\left( \opW_1 \right)^\prime_f h (\Zb,\Db,\Sb)  f(\Zb) + \opW_1(f)(\Zb,\Db,\Sb)  h(\Zb)\right]   \;  \delta(E-\phi(\Zb,\Db,\Sb)) \; \mathrm{d}\Zb
$$
and $(\opL_1)_f^\prime$ is bounded for every $f\in L_2(\Omega)$ bounded.
\end{theorem}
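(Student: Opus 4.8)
The plan is to recognise $\opL_1$ as a composition of maps whose Fréchet derivatives are elementary, and to conclude with the chain and product rules. Throughout I fix the data space $\Y=L_2(\E\times\D\times\S)$ and, for $R>0$, the set $U_R=\{f\in L_2(\Omega):\ \|f\|_\infty\le R\}$, which is what the boundedness hypothesis supplies. Writing $\mu(\cdot,E)=\sigma(E)f$, the weight factorises as
$$
\opW_1(f)(\Xb,\Db,\Sb)=a(\Xb,\Db,\Sb)\,\exp\big(-T_{\Db,\Sb}f(\Xb)\big),
$$
where $a$ collects the $f$-independent photometric and Klein--Nishina factors -- bounded on the compact set of admissible configurations thanks to the geometric separation $\Omega\subset B_r(0)$ with $r<r_1$ -- and
$$
T_{\Db,\Sb}f(\Xb)=\sigma(E_0)\!\!\int_{\Sb\to\Xb}\!\! f\ +\ \sigma\big(\phi(\Xb,\Db,\Sb)\big)\!\!\int_{\Xb\to\Db}\!\! f
$$
is linear in $f$. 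A preliminary step is to show that $f\mapsto T_{\Db,\Sb}f$ is bounded from $L_2(\Omega)$ into $L_2(\Omega)$, and from $L_\infty(\Omega)$ into $L_\infty(\Omega)$, uniformly in the admissible $(\Db,\Sb)$; for the $L_2$ bound I would use a polar-coordinate Fubini argument centred at $\Sb$ (resp.\ $\Db$) together with Cauchy--Schwarz, exploiting that $\|\Xb-\Sb\|\ge r_1-r>0$ on $\Omega$.

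Next I would prove that $f\mapsto\opW_1(f)$ is Fréchet differentiable on $U_R$ with $(\opW_1)'_f h=-\,a\,e^{-T_{\Db,\Sb}f}\,T_{\Db,\Sb}h$. Since $z\mapsto e^{-z}$ is entire and $T_{\Db,\Sb}$ is bounded linear, this is just the chain rule; the remainder equals $a\,e^{-T_{\Db,\Sb}f}\big(e^{-T_{\Db,\Sb}h}-1+T_{\Db,\Sb}h\big)$, which the elementary inequality $|e^{-u}-1+u|\le\frac12 u^2 e^{|u|}$, together with the uniform bounds on $T$ and on $\|f\|_\infty$ over $U_R$, controls by $C(R)\,\|h\|_\infty^2$ (hence by $C(R)\,\|h\|_\infty\|h\|_{L_2}$ when measured in $L_2$). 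In particular $\|\opW_1(f)\|_\infty$ and the operator norm of $(\opW_1)'_f$ stay bounded on $U_R$.

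Then I would assemble $\opL_1(f)=\mathcal{B}\big[\,\opW_1(f)\cdot f\,\big]$, where $\mathcal{B}$ is the linear operator ``integrate against $\delta(E-\phi(\cdot,\Db,\Sb))\,\mathrm d\Xb$'', i.e.\ restriction to the level sets of $\phi(\cdot,\Db,\Sb)$ weighted by $|\nabla_\Xb\phi|^{-1}$. By the geometric setup $\phi(\cdot,\Db,\Sb)$ is smooth with non-vanishing gradient on $\Omega$ -- its level sets being exactly the circular arcs / spindle tori $\mathfrak{T}(\omega,\Db,\Sb)$ -- so $\mathcal{B}$ is a bounded linear operator $L_2(\Omega)\to\Y$, a mapping property of the generalised (circular-arc / spindle-torus) Radon transform of the kind established in \cite{RH_2018,...,...}. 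The pointwise product is continuous bilinear from $L_\infty(\Omega)\times L_2(\Omega)$ into $L_2(\Omega)$, hence smooth, with derivative $(u,v)\mapsto u\,\delta v+\delta u\,v$. Composing the differentiable maps $f\mapsto(\opW_1(f),f)$, the product, and $\mathcal{B}$, the chain rule gives $\opL_1$ Fréchet differentiable on $U_R$ with $(\opL_1)'_f h=\mathcal{B}\big[(\opW_1)'_f h\cdot f+\opW_1(f)\cdot h\big]$, which is the asserted formula. Boundedness of $(\opL_1)'_f$ is then immediate: $\|(\opL_1)'_f h\|_\Y\le\|\mathcal{B}\|\big(\|(\opW_1)'_f h\|_{L_2}\|f\|_\infty+\|\opW_1(f)\|_\infty\|h\|_{L_2}\big)\le C(R)\,\|h\|_{L_2}$ for $f\in U_R$, the boundedness of $f$ being exactly what keeps both summands in $L_2(\Omega)$ with controlled norm.

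The step I expect to be the main obstacle is the one hidden inside $\mathcal{B}$: showing, uniformly over the admissible $(\Db,\Sb)$, that $\phi(\cdot,\Db,\Sb)$ is a non-degenerate defining function on $\Omega$ (smooth, gradient bounded away from zero, level sets embedded submanifolds of bounded area) and that the induced level-set integral is bounded into $\Y$ -- equivalently, the $L_2$-mapping property of the generalised Radon transform attached to $\phi$; this is where the separation of $\Omega$ from the source/detector annulus $\S,\D$ is decisive. A secondary point deserving care is the topology in which differentiability is claimed: since the remainder above is naturally of size $\|h\|_\infty\|h\|_{L_2}$, the statement is cleanest with respect to the $L_\infty$-topology on $U_R$, the derivative nonetheless extending to a bounded operator on $L_2(\Omega)$. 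Once the geometry-dependent mapping property of $\mathcal{B}$ is in hand, the differentiability claim reduces to the product and chain rules together with the exponential estimate above.
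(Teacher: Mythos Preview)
Your approach is essentially the same as the paper's: both recognise $\opW_1$ as the composition of the smooth map $\exp$ with a bounded linear operator (the X-ray transform along the broken path), then obtain the derivative of $\opL_1$ by a product-rule expansion of $\opW_1(f)\cdot f$ inside the level-set integral. The paper carries this out by directly expanding $\opL_1(f+h)$ and reading off the linear part, whereas you package the same computation abstractly via the chain and product rules applied to the factorisation $\opL_1(f)=\mathcal{B}[\opW_1(f)\cdot f]$.

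The difference is one of rigor rather than strategy. The paper's proof is a formal calculation: it asserts without further argument that $\opW_1$ is Fr\'echet differentiable (``composition of a smooth function and of a linear operator'') and that the derivative is bounded (``it is clear''). You, by contrast, explicitly isolate the two points that actually require work and that the paper leaves implicit: (i) the $L_2$-mapping property of the level-set integral $\mathcal{B}$, which is where the geometric separation of $\Omega$ from the source/detector set enters and which the paper only secures later via the FIO machinery of Theorem~\ref{thm:smoothness_props}; and (ii) the topology in which the remainder is $o(\|h\|)$, your observation that the natural estimate is $O(\|h\|_\infty\|h\|_{L_2})$ being exactly the issue behind the subsequent discussion of the tangential cone condition. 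Your version thus supplies the analytic scaffolding the paper's argument tacitly relies on.
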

\begin{proof}
The Fréchet derivative for $\opL_1$ is defined as 
$$
\opL_1 (f+h) = \opL_1 (f) +  (\opL_1)_f^\prime h + o(h).
$$

Inspecting $\opW_1$ closer, we note that it is the composition of a smooth function and of a linear operator which implies that $\opW_1$ is Fréchet-differentiable. Given the Fréchet derivative of $\opW_1$, it holds
$$
\opW_1(f+h) = \opW_1(f) + \left( \opW_1 \right)^\prime_f h + o(h)
$$
leading to 
\begin{eqnarray*}
\opL_1 (f+h) &=\opL_1 (f) 
+ \int_{\Omega} \left( \opW_1 \right)^\prime_f h(\Zb,\Db,\Sb)  \; f(\Zb) \;  \delta(E-\phi_1(\Zb,\Db,\Sb)) \; \mathrm{d}\Zb\\
&+ \int_{\Omega} \opW_1(f)(\Zb,\Db,\Sb)  \; h(\Zb) \;  \delta(E-\phi_1(\Zb,\Db,\Sb)) \; \mathrm{d}\Zb\\
&+ \int_{\Omega} h(\Zb) \; \left( \opW_1 \right)^\prime_f h(\Zb,\Db,\Sb)  \;   \delta(E-\phi_1(\Zb,\Db,\Sb)) \; \mathrm{d}\Zb\\
&+ \int_{\Omega} f(\Zb) \; o(h) \;  \delta(E-\phi_1(\Zb,\Db,\Sb)) \; \mathrm{d}\Zb.
\end{eqnarray*}
The linear part w.r.t. $h$ on the right handside, \textit{i.e.} its Fréchet derivative, reads now
$$
(\opL_1)_f^\prime h =  \int_{\Omega} \left[\left( \opW_1 \right)^\prime_f h (\Zb,\Db,\Sb)  f(\Zb) + \opW_1(f)(\Zb,\Db,\Sb)  h(\Zb)\right]  \;  \delta(E-\phi_1(\Zb,\Db,\Sb)) \; \mathrm{d}\Zb .
$$
For $\Omega$ compactly supported and for $f$ being bounded, it is clear that the linear operator $f(\Zb) \left( \opW_1 \right)^\prime_f + \opW_1(f) \opI$ is bounded and consequently the property holds for $(\opL_1)_f^\prime $.
\end{proof}

We observe that the computation of the Fr\'echet derivative of $\opL_1$, for instance within the Kaczmarz's method, would require the computation of $\left( \opW_1 \right)^\prime_f$ and $\opW_1(f)$ at each iterate which constitutes, especially in 3D, an expensive task.

Besides the computation cost, the Fr\'echet derivative needs to satisfy the so-called \textit{tangential cone condition} which would read as
$$
\Vert \opL_1(f+h) - \opL_1(f) - (\opL_1)_f^\prime h \Vert_\Y \leq c_t \Vert \opL_1(f+h) - \opL_1(f) \Vert_\Y ,
$$
with some constant $c_t < 1$, in order that most of the iterative schemes applied on $\opL_1$ converge.
Using the expression of $\opW_1$, it holds with symbolic notations
\begin{eqnarray*} 
&\fl \opL_1(f+h) - \opL_1(f) - (\opL_1)_f^\prime h \\
&\fl= \int_{\mathfrak{T}} \left[\opW_1(f+h) \; (f+h) - \opW_1(f)\; f - \opW_1(f) h - \left(\opW_1 \right)^\prime_f h \; f  \right]\\
&\fl= \int_{\mathfrak{T}} \left[\left(\opW_1(f+h) - \opW_1(f)\right)\; (f+h)  - \left(\opW_1 \right)^\prime_f h \; f  \right] \\
&\fl= \int_{\mathfrak{T}} \left[ \opW_1(f+h) \left(1 - \opW_1(-h)\right)\; (f+h)  + (\opX h) \opW_1(f) \; f  \right]\\
&\fl= \int_{\mathfrak{T}} (-\opX h)\left[ \opW_1(f+h) \; (f+h)  +  \opW_1(f) \; f  \right] -  \int_{\mathfrak{T}} \sum_{n=2}^\infty \frac{(\opX h)^n}{n!}  \opW_1(f+h)\; (f+h).
\end{eqnarray*}
We observe that the tangential cone condition might not hold for "large" $h$ as the second term explodes for $h$ large. Therefore, dealing with the nonlinear problem might require an \textit{a priori} initial value close to the solution which is not always possible to guarantee. 

\subsection{Linear approximations and mapping properties}

This is the reason why it is relevant to split the dependency on $f$ and therefore study instead linear approximations $\opL_1(\mu^{*},\cdot)$ and $\opL_2(\mu^{*},\cdot)$ with $\mu^{*}$ a \textit{known a priori} smooth approximation to the original $\mu$. Such approximations have the following properties on the Sobolev scale.

\begin{definition}
Let $\Omega \subset \R^d$, the Sobolev space of order $\alpha$, noted $H^\alpha$, is defined as
$$
H^\alpha (\Omega) := \left\{ f\in L_2(\Omega) \ : \ (1+|\xi|^2)^{\alpha/2} \hat{f} \in L_2(\R^d) \right\}
$$
with $\hat{f}$ the Fourier transform of $f$.
We denote by $H_0^\alpha(\Omega) \subset H^\alpha(\Omega)$ the Sobolev space of order $\alpha$ with functions vanishing at the boundaries of $\Omega$.
Furthermore, we define the Sobolev space of order $\alpha$ of locally square-integrable functions by
\begin{equation*}
    H_{loc}^\alpha(\Omega) := \big \{f\in L_{2,loc}(\Omega): \varphi f\in H^\alpha_0(\Omega) \textup{ for all } \varphi\in C^\infty(\Omega) \big \}.
\end{equation*}
\end{definition}

\begin{theorem}[\cite{Rigaud21,kuger2020joint}]\label{thm:smoothness_props}
We let $h \in C^\infty(\Omega)$ with $\Omega \subset \R^d$, $d=2,3$. Then, for the detector set $\D$ defined in eq. (\ref{eq:def_D}) where the domain of the parameters $(\alpha_1,\ldots,\alpha_{d-1})$ is open and $\Sb\in\D$ fixed, the operators $\opL_1(h,\cdot)$ and $\opL_2(h,\cdot)$ are Fourier integral operators of order 
$$
\tau_1 := \frac{1-d}{2} \quad \text{and} \quad \tau_2 := \frac{2-3d}{4}
$$ 
respectively and it exists $\E \subset \R^+$ where they are continuous mapping from $H_0^\alpha(\Omega)$ to $H_{loc}^{\alpha-\tau_1}(\E,\D)$ and $H_0^\beta(\Omega^2)$ to $H_{loc}^{\beta-\tau_2}(\E,\D)$ respectively for all $\alpha \in \R$ and $\beta \in \R^+$.
\end{theorem}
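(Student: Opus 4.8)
The plan is to exhibit $\opL_1(h,\cdot)$ and $\opL_2(h,\cdot)$ as Fourier integral operators and then to read off both the order and the Sobolev continuity from H\"ormander's FIO calculus \cite{...}. The starting point is to replace the Dirac mass in the defining formulas for $\opL_1$ and $\opL_2$ by the oscillatory integral $\delta(t)=\frac{1}{2\pi}\int_{\R}e^{i\sigma t}\,\mathrm{d}\sigma$, which gives, for instance,
$$
\opL_1(h,f)(E,\Db,\Sb)=\frac{1}{2\pi}\int_{\R}\!\int_{\Omega}e^{i\sigma(E-\phi(\Xb,\Db,\Sb))}\,\opW_1(h)(\Xb,\Db,\Sb)\,f(\Xb)\,\mathrm{d}\Xb\,\mathrm{d}\sigma ,
$$
and the analogous representation for $\opL_2$ with phase $\sigma(E-\psi(\Zb,\Db,\Sb))$ and integration over $\Omega^2$. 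Since $h\in C^\infty$, the weights $\opW_1(h)$ and $\opW_2(h)$ --- which, as recalled after Theorem~\ref{thm:L1_Frechet}, are the product of an exponential of an X-ray transform of $h$ with a smooth, strictly positive geometric factor --- are smooth and, restricted to $\Xb\in\Omega$ (resp. $\Zb\in\Omega^2$), compactly supported; they are therefore classical amplitudes of order $0$ in the phase variable $\sigma$.

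Next I would check that $\Phi_1=\sigma\big(E-\phi(\Xb,\Db,\Sb)\big)$ and $\Phi_2=\sigma\big(E-\psi(\Zb,\Db,\Sb)\big)$ are nondegenerate phase functions. The stationary set $\{\partial_\sigma\Phi_1=0\}=\{E=\phi(\Xb,\Db,\Sb)\}$ is a smooth hypersurface as soon as $\nabla_\Xb\phi\neq 0$ on it, which follows from the explicit form (\ref{eq:phase_phi})--(\ref{eq:def_kappa_rho}) together with the geometric separation $\Omega\subset B_r(0)$, $r<r_1$, of the object from $\S$ and $\D$ (so that $\Vert\Xb-\Sb\Vert$ and $\Vert\Db-\Xb\Vert$ stay bounded away from $0$ and the level sets of $\phi$ foliate a neighbourhood of $\Omega$); moreover $\mathrm{d}(\partial_\sigma\Phi_1)$ never vanishes because of its $\mathrm{d}E$-component, so $\Phi_1$ is a nondegenerate (indeed clean) phase, and the same computation applies to $\Phi_2$ via the regularity of $\psi$. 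This identifies the canonical relations $C_1\subset(T^*(\E\times\D)\setminus 0)\times(T^*\Omega\setminus 0)$ and $C_2\subset(T^*(\E\times\D)\setminus 0)\times(T^*\Omega^2\setminus 0)$. The orders are then a dimension count: an operator $\int\!\int e^{i\varphi}a\,\mathrm{d}x\,\mathrm{d}\sigma$ with a single phase variable and amplitude of order $0$, from functions on a space of dimension $n_X$ to functions on a space of dimension $n_Y$, has order $\frac12-\frac{n_X+n_Y}{4}$; with $\E\times\D$ of dimension $1+(d-1)=d$ and $n_X=d$ (resp. $2d$) one obtains $\frac12-\frac{2d}{4}=\frac{1-d}{2}=\tau_1$ and $\frac12-\frac{3d}{4}=\frac{2-3d}{4}=\tau_2$. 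As a sanity check, $\phi(\Xb,\cdot)=\Xb\cdot\theta$ reduces $\opL_1$ to the Radon transform with its classical smoothing by $(d-1)/2$.

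The Sobolev mapping property then follows from the FIO continuity theorem once the projections of $C_1$ and $C_2$ are controlled. For $\opL_1$ I would invoke the microlocal analysis of the generalized Radon transform over circular arcs ($d=2$) resp. spindle tori ($d=3$), which shows --- under the stated hypothesis on $\D$, e.g. $\D$ the sphere through $\Sb$, chosen precisely to remove the degeneration of these curves/surfaces at the source --- that $C_1$ is a local canonical graph, i.e. the left projection $\pi_L:C_1\to T^*(\E\times\D)$ is a local diffeomorphism (the Bolker condition); hence $\opL_1(h,\cdot):H_0^\alpha(\Omega)\to H_{loc}^{\alpha-\tau_1}(\E,\D)$. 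For $\opL_2$, $C_2$ is no longer a graph (since $\dim\Omega^2=2d>d=\dim(\E\times\D)$); instead one verifies that the relevant Hessian blocks of $\psi$ have maximal rank off a negligible set, so that $\pi_L$ is a clean submersion and the right projection $\pi_R:C_2\to T^*\Omega^2$ an injective immersion, and the order-$\tau_2$ estimate $H_0^\beta(\Omega^2)\to H_{loc}^{\beta-\tau_2}(\E,\D)$ then follows --- equivalently, by duality, from the corresponding over-determined estimate for $\opL_2^\star$. Passing from compactly supported to $H_0^\alpha$ data and from the conic estimate to $H^m_{loc}$ on the data side is routine (pseudolocality, duality, interpolation).

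The hard part is this last step, specifically the geometry of the double-scattering phase $\psi$: because $\opL_2$ integrates over $\Omega^2$ while the data live over a $d$-dimensional manifold, $C_2$ is genuinely not a canonical graph, and one must exclude the fold- or blowdown-type singularities of $\pi_L$ that would force an additional loss in the Sobolev exponent. Pinning down the precise conditions on $\D$ --- and on the support of $f$ relative to $\S$ and $\D$ --- that keep the projections clean with the sharp orders $\tau_1,\tau_2$ is the crux, and is exactly what the cited analyses of the first- and second-order Compton transforms supply.
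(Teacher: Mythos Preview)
The paper does not prove this theorem at all: its entire ``proof'' reads ``See \cite{...} for $d=3$ and \cite{...} for $d=2$.'' The result is imported from the cited microlocal analyses of the first- and second-order Compton transforms, so there is nothing to compare against on the level of argument.

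Your sketch is the right one and is, in outline, exactly what those references do: rewrite the $\delta$-constraint as a one-dimensional oscillatory integral, observe that $\opW_i(h)$ is a smooth amplitude of order~$0$ when $h\in C^\infty$, verify that the phases $\sigma(E-\phi)$ and $\sigma(E-\psi)$ are nondegenerate, and then read off the order from H\"ormander's dimension formula $m+\tfrac{N}{2}-\tfrac{n_X+n_Y}{4}$ with $m=0$, $N=1$, $n_Y=\dim(\E\times\D)=d$ and $n_X=d$ (resp.\ $2d$), which indeed gives $\tau_1=(1-d)/2$ and $\tau_2=(2-3d)/4$. The Sobolev mapping statement for $\opL_1$ then follows once the Bolker condition is checked, and you correctly flag that the hypothesis on $\D$ (e.g.\ the sphere through $\Sb$) is precisely what is needed to rule out degenerations of the arcs/tori and make $C_1$ a local canonical graph. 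Your treatment of $\opL_2$ is also the right picture: since $\dim\Omega^2=2d>d$, $C_2$ cannot be a graph, and one must instead establish that the projections are clean with the appropriate ranks; this is genuinely the delicate point, and it is the content of the cited second-order analysis rather than something one can dispatch in a line. In short, your proposal supplies substantially more than the paper itself does here, and the approach is sound.
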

\begin{proof}
See \cite{Rigaud21} for $d=3$ and \cite{kuger2020joint} for $d=2$.
\end{proof}

\par In the next sections, we consider a semi-discrete and a fully discrete setting for the spectral data in order to better reflect data acquisition. To this end, we consider a set of source and detector positions $(\Sb_k,\Db_k)_{k=1,\ldots,K}$ as well as a set of energy $(E_p)_{p=1,\ldots,P}$. The sampling step on the energy will in practice depend on the energy resolution (called FWHM) of the detector.  

\par 
We also aim to connect these both settings in terms of representation. 
However, similarly to the semi-discrete Radon transform, see e.g. in \cite[Chapter 6.3.]{Rieder}, we face the problem that sampling $\opL_1(\mu,f)$ on a finite set is not well-defined for arbitrary $f\in L^2(\Omega)$, as its equivalence class may not have a continuous representative. Indeed, for $\Sb$ fixed, the operator 
\begin{eqnarray*}
    \opL_1: &L_2(\Omega) &\longrightarrow H_{loc}^{-\tau_1}(\E,\D)\\
   &f &\longmapsto \opL_1(\mu,f)(\cdot,\cdot,\Sb)
\end{eqnarray*}
is not well-defined in the semi-discrete setting, as $\opL_1(\mu,f)$ would have no continuous representative, and therefore discretizing $(E,\Db,\Sb)$ would be improper regarding the continuous case. However, we can exploit an embedding property for the Sobolev spaces. 
Before stating this property, we recall some geometric concepts.  
\begin{definition}{\cite[Section 2.1.]{Wloka}}
A \emph{cone} with vertex at $x\in \R^d$ is a set of the type
\begin{equation*}
    C(x, r, U) := B_r(x) \cap \{\lambda(y-x): y\in U, \lambda>0\},
\end{equation*}
where $B_r(x)$ is the open ball around $x$ with radius $r$ and $U$ is an open, non-empty subset of $\R^d$.
\end{definition}

\begin{definition}{\cite[Definition 2.2.]{Wloka}}\label{def:cone_property}
A set $\Omega'\subseteq \R^d$ is said to have the \emph{cone property} if there exists a cone $C_0$ in $\R^d$, such that for all $x\in \overline{\Omega'}$ there is a cone $C(x, r, U)\subseteq \Omega'$ with vertex at $x$, which is congruent to $C_0$. That is, $C_0$ and $C_x$ must be equal up to rotation and translation.
\end{definition}

The embedding theorem below is a special case of \emph{Sobolev's Lemma}, see \cite[Theorem 6.2.]{Wloka}. Therein, the set $\Omega'$ does not only need to satisfy the cone property, but also the \emph{segment property} in \cite[Definition 2.1.]{Wloka}. However, it is mentioned in \cite{Wloka} that for bounded $\Omega'$ the cone property is sufficient.

\begin{theorem}{\cite[Corollary 6.1]{Wloka}, \cite{Adams}} \label{thm:sobolev_embedding}
Let $\Omega'\subset \mathbb{R}^n$ be a bounded region and have the cone property. Then the Sobolev spaces $H_0^s(\Omega')$ and $H^s(\Omega')$ are continuously embedded into $C^m(\overline{\Omega'})$ for all $s > m + n/2$.
\end{theorem}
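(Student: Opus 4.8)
Since this statement is classical, I will only indicate the argument. The plan is to reduce the claim to the corresponding embedding on all of $\R^n$, which is an elementary Fourier‑transform estimate, and then to transfer it to the bounded region $\Omega'$ by means of a bounded linear extension operator — the existence of which is exactly what the cone property guarantees.

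First I would treat the case $\Omega' = \R^n$. For $f\in H^s(\R^n)$ with $s > n/2$, write $\hat f(\xi) = (1+|\xi|^2)^{-s/2}\,(1+|\xi|^2)^{s/2}\hat f(\xi)$ and apply the Cauchy--Schwarz inequality, which gives $\|\hat f\|_{L_1(\R^n)} \le \|(1+|\xi|^2)^{-s/2}\|_{L_2(\R^n)}\,\|f\|_{H^s(\R^n)}$; the first factor on the right is finite precisely because $2s > n$, since the integrand decays like $|\xi|^{-2s}$ at infinity. Hence $\hat f\in L_1(\R^n)$, and Fourier inversion shows that $f$ agrees almost everywhere with a bounded continuous function with $\|f\|_{L_\infty(\R^n)} \le c\,\|\hat f\|_{L_1(\R^n)} \le c'\,\|f\|_{H^s(\R^n)}$. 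If moreover $s > k + n/2$, then for every multi‑index $\alpha$ with $|\alpha|\le k$ one has $D^\alpha f\in H^{s-|\alpha|}(\R^n)$ and $s - |\alpha| \ge s-k > n/2$, so the previous step applies to each $D^\alpha f$ and yields $f\in C^k(\R^n)$ together with $\|f\|_{C^k(\R^n)} \le C\,\|f\|_{H^s(\R^n)}$.

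Next I would pass from $\R^n$ to $\Omega'$. Because $\Omega'$ is bounded and has the cone property (Definition~\ref{def:cone_property}), there is a bounded linear extension operator $\opE\colon H^s(\Omega') \to H^s(\R^n)$ with $(\opE f)|_{\Omega'} = f$; this is the Calder\'on‑type extension theorem, see \cite{Adams,Wloka}. Restricting the whole‑space bound to $\overline{\Omega'}$ then gives $\|f\|_{C^k(\overline{\Omega'})} \le \|\opE f\|_{C^k(\R^n)} \le C\,\|\opE f\|_{H^s(\R^n)} \le C\,\|\opE\|\,\|f\|_{H^s(\Omega')}$, which is the claimed continuous embedding $H^s(\Omega')\hookrightarrow C^k(\overline{\Omega'})$. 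Since $H_0^s(\Omega')\subseteq H^s(\Omega')$ with continuous inclusion, the same estimate holds on $H_0^s(\Omega')$, completing the argument.

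The part I expect to be the real obstacle is the construction of the extension operator: the whole‑space inequality is routine, but extending an $H^s$‑function past $\partial\Omega'$ without loss of regularity requires the boundary not to be too wild, and the cone property is essentially the weakest geometric assumption under which such a bounded extension exists for arbitrary $s\ge 0$. (If one only wanted the statement for $H_0^s(\Omega')$ one could bypass the extension theorem by approximating with functions in $C_c^\infty(\Omega')$ and extending them by zero, but routing the proof through $H^s(\Omega')$ settles both spaces at once.)
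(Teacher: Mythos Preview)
Your proof sketch is correct and follows the standard route (whole-space Fourier estimate plus Calder\'on extension). Note, however, that the paper does not give its own proof of this statement: it is quoted as a classical result with a reference to \cite{Adams,Wloka}, preceded only by the remark that it is a special case of Sobolev's Lemma (Theorem~6.2 in \cite{Wloka}) and that for bounded $\Omega'$ the cone property alone suffices. So there is nothing in the paper to compare your argument against beyond the citation.
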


In order to exploit this result, it is important to relax the locality constraint in Theorem \ref{thm:smoothness_props}. To this end, we consider a suited smooth cut-off $\chi$ which vanishes at the boundaries of $(\E,\D)$ such that $\chi \opL_1(h,f) \in H_{0}^{\alpha-\tau_1}(\E,\D)$ for $h \in C^\infty(\Omega)$, $f \in H_0^\alpha(\Omega)$. Choosing $k=0$, it follows that the operator 
\begin{eqnarray*}
    \chi\opL_1: &H_0^\alpha(\Omega) &\longrightarrow H_{0}^{\alpha-\tau_1}(\E,\D)\\
    &f &\longmapsto \chi \opL_1(\mu,f)(\cdot,\cdot,\Sb)
\end{eqnarray*}
has a continuous representative for 
$$
\alpha - \frac{1-d}{2}> \frac{d}{2} \quad \Leftrightarrow \quad \alpha > \frac{1}{2}.
$$
The result holds similarly for $\opL_2$.
Therefore, assuming $\mu \in C^\infty(\Omega)$ and $f \in H_0^\alpha(\Omega)$ with $\alpha > 1/2$, we can now define the forward operators for  the semi-discrete first-order and second-order scattering by
\begin{eqnarray*}
    \opL_1^\mu: &H_0^\alpha(\Omega) &\longrightarrow \R^{P \times K}\\
    &f &\longmapsto \left(\chi \opL_1(\mu,f)(E_p,\Db_k,\Sb_k)\right)_{p=1,\ldots,P, \  k=1,\ldots,K}\\
    \opL_2^\mu : &H_0^\alpha(\Omega^2) &\longrightarrow \R^{P \times K}\\
    &\bar{f} &\longmapsto \left(\chi \opL_2(\mu,\bar{f})(E_p,\Db_k,\Sb_k)\right)_{p=1,\ldots,P, \  k=1,\ldots,K}. \\
\end{eqnarray*}
Letting aside the ballistic radiation $g_0$ which contributes in only one value at $E_0$ in the spectrum, the spectral problem (\ref{eq:spectral_IP}) becomes then 
\begin{equation}\label{eq:IP_sd}
\opL_1^\mu  f + \opL_2^\mu \bar{f}  = \mathrm{Spec}.   
\end{equation}
Most reconstruction techniques require the computation of the adjoint operator, here of $\opL_1^\mu$ and $\opL_2^\mu$, and consider the topology of the $L_2$-space in order to take into account perturbations in the measurement. However, akin to the semi-discrete Radon transform, see \cite{Rieder}, the adjoint of $\opL_1^\mu$ is not continuous w.r.t. the $L_2$-topology and its computation in the $H^\alpha$-topology can be a hard analytic and computational task. A way to circumvent this obstacle is to restrict the domain space to a finite dimensional subspace allowing us to use for instance the $L_2$-topology, by equivalence of the norms.
Therefore, we consider for the implementation the fully-discrete case which can be formulated by restricting the forward domain space into a subspace  $\X_j \subset H_0^\alpha(\Omega)$ with $\mathrm{dim}(\X_j) < \infty$, \textit{i.e.} 
\begin{eqnarray*} 
     \left(\opL_1^\mu \right)_j: & \X_j  &\longrightarrow \R^{P \times K}\\
     &f_j &\longmapsto \opL_1^\mu f_j  \\
     \left(\opL_2^\mu \right)_j : & \X_j \times \X_j  &\longrightarrow \R^{P \times K}\\
     &\overline{f_j} &\longmapsto \opL_2^\mu \overline{f_j}.
\end{eqnarray*}
Since $\X_j$ is finite dimensional, $\left(\opL_1^\mu \right)_j$ and $\left(\opL_2^\mu \right)_j$ are bounded with respect to the $L_2$-norm and more standard approaches can be used to solve 
\begin{equation}\label{eq:IP_fd}
\left(\opL_1^\mu \right)_j f_j + \left(\opL_2^\mu \right)_j \overline{f_j}  = \mathbf{g}_{1} + \mathbf{g}_{2}    
\end{equation}
where $\mathbf{g}_i$, $i=1,2$, denotes the sampled version of $g_i$. An interesting question is how to relate the solution to all subproblems (\ref{eq:IP_fd}) to the solution of (\ref{eq:IP_sd}). This is answered in Section \ref{sec:resesop}.

\subsection{Model uncertainty and reconstruction strategies}

Focusing on the Compton part, the spectral problem (\ref{eq:spectral_IP}) can be reformulated with the fully discrete setting in eq. (\ref{eq:IP_fd}) by
$$
\text{Find $f$ from $\mathbf{Spec}$ with  $\Vert\left(\opL_1^\mu \right)_j f_j + \left(\opL_2^\mu \right)_j \overline{f_j}  -  \mathbf{Spec} \Vert_2 \leq \epsilon, \ j\in \N$}
$$
in which $\mathbf{Spec} \in \R^{P \times K}$ is the sampled version of $\mathrm{Spec}$. 

\begin{remark}
Using $g_0$ in the reconstruction process is sensible. For instance, it is possible to reconstruct under sparsity constraints a first approximation of the attenuation map which can help to refine the forward model, in particular the weight functions, see \cite{KugerRigaud21,kuger2020joint}. However, we discarded this part of the spectrum as we wanted  to stress the model uncertainty using a weaker a priori of the attenuation map.
\end{remark}

This inverse problem is in particular challenging regarding the following two aspects:
\begin{itemize}
    \item \textbf{complexity:} 
    while the computational cost regarding $\opL_1^mu$ is similar to the one of the semi-discrete Radon transform   (assuming the weight function is precomputed),  evaluating $\opL_2^\mu$ is much more expensive. Given a grid of $N^d$ elements, then the complexity of $\opL_1^\mu$ is of order $O(N^{d} \times J \times K)$ while $\opL_2^\mu$ is of order $O(N^{2d}\times J \times K)$. In 2 dimensions and $N=100$, it means that the computation of the second order scattering is 10000 times more expensive! \\
    This represents an important obstacle which encourages us to focus on the first-order scattered radiation and forces us to use different simulators such as a Monte-Carlo approach for the second-order. 
    \item \textbf{model uncertainty:} the linearization of the forward models by assuming a prior attenuation map $\mu^*$  leads to an inaccuracy in the model \textit{i.e.} we have with some $\eta_{1j}>0$
    $$
    \Vert \left(\opL_1^\mu \right)_j f_j  - \left(\opL_1^{\mu^*} \right)_j f_j  \Vert_2 \leq \eta_{1j}.
    $$
    The issue of the model uncertainty further increases when focusing on the first-order scattering as proposed above. In this case, the second-order (and larger order in practice) has to be treated as model error as well, which yields 
    $$
    \Vert \left(\opL_1^\mu \right)_j f_j   + \left(\opL_2^\mu \right)_j \overline{f_j}  - \left(\opL_1^{\mu^*} \right)_j f_j \Vert_{2} \leq \eta_j,
    $$
    where $\eta_j$ can be expected large.
\end{itemize}
Another reason to focus on the first-order part is the smoothness properties given in Theorem \ref{thm:smoothness_props}. Since the $\opL_2$ is a smoother FIO than $\opL_1$, it tends to \textit{spread} the features of $f$ and therefore the first-order part is \textit{richer} for encoding $f$. A way to emphasize the smoothness scale, it is possible to add to the inverse problem a discretized differential operator (finite difference for example), $\opP: \R^{P\times K} \to \R^{P\times K}$ acting on the energy variable, leading to solve
    $$
    \opP \left(\opL_1^\mu \right)_j f_j
    = \opP  \mathbf{Spec}_j \ \text{with} \ f\in\X_j \ \text{for some} \  j\in \N.
    $$
and to the model uncertainty
    $$
    \Vert \opP \left(\left(\opL_1^\mu \right)_j f_j   + \left(\opL_2^\mu \right)_j \overline{f_j}  - \left(\opL_1^{\mu^*} \right)_j f_j \right)\Vert_{2} \leq \eta_j^\opP.
    $$
We observe empirically that the use of a differential operator reduces the model uncertainty, \textit{i.e.} $\eta_j^\opP < \eta_j$, but this remains to be proved. This strategy was successfully applied: in 3D using FBP-type algorithm for the extraction of the contours in \cite{Rigaud21} and in 2D using iterative total-variation (TV) regularization in \cite{KugerRigaud21,kuger2020joint}. 
However errors and artifacts due to an inaccurate model can appear and need to be addressed using data-driven algorithm.
\\

\par 
In \cite{Blanke_20} the authors consider a CT problem affected by motion of the patient, which leads to an inexact forward operator, as the motion is not explicitly known. They proposed to apply the regularized sequential subspace optimization (RESESOP) for solving inverse problem subject to model uncertainty and studied how the method is well-posed and regularizing for the exact inverse problem. In Section \ref{sec:resesop}, we propose to adapt this strategy for solving the semi-discrete and fully-discrete problems associated to CST. We also prove that the fully-discrete RESESOP is a regularization method for the semi-discrete SESOP solution. \\

\par A second approach consists in implementing the \textit{deep image prior} (DIP) unsupervised learning method in order to address our inexact inverse problem. Discussed in Section \ref{sec:DIP} and in Section \ref{sec:simu}, the standard loss function does not succeed to compensate for the model uncertainty in CST. Inspired from the RESESOP approach, we propose to adapt the loss function by incorporating the model uncertainty in the loss function which leads to similar results with the RESESOP method.

\section{Study and construction of a RESESOP algorithm for CST} \label{sec:resesop}

As discussed in Section \ref{sec:forward}, we are facing the issue of solving an inverse problem without explicitly knowing the forward operator. In order to get a valid reconstruction, we need to take the model uncertainty between the exact and inexact forward operator into account. In CST, the model uncertainty between $\opL_1^\mu$ and $\opL_1^{\mu^*}$ highly depends on the different source and detector positions, but also on the energy of the scattered photons. This is why we want to consider a system of inverse problems, instead of a single one.
To handle the model uncertainty issue for multiple inverse problems, we use the RESESOP-Kaczmarz procedure presented in \cite{Blanke_20}.
In the first part of this section, we give a recap on the functionality and regularization properties of this method.
Since we want to solve the fully discrete problem (\ref{eq:IP_fd}), the question arises whether the RESESOP outcome for the fully discrete problem regularizes the semi-discrete problem regarding $\opL_1^\mu$. This is inspected in the second part of this section and further, whether these reconstructions are stable with respect to the chosen subspace.
Last but not least, we explain how the RESESOP framework can be applied to CST.

\subsection{RESESOP-Kaczmarz for a system of linear inverse problems}\label{sec:RESESOP_recall}

Consider finitely many linear bounded operators $\opA_k: (\X, \Vert \cdot \Vert_\X) \longrightarrow (\Y_{k}, \Vert \cdot \Vert_{\Y_k})$
between Hilbert spaces $\X, \Y_k$, where $k\in \{ 0, 1,2, ..., K-1\}$ for some $K\in \mathbb{N}$.
We assume that only approximate versions of $\opA_k$ in the form of other linear bounded operators $ \opA_k^\eta: \X \longrightarrow \Y_{k}$ are available, satisfying
\begin{equation}
    \Vert \opA^\eta_k - \opA_k \Vert_{\X\to\Y_k} \leq \eta_k,
\end{equation}
where $\Vert \cdot \Vert_{\X\to\Y_k}$  denotes the operator norm of linear bounded functions between $\X$ and $\Y_k$.
In what follows, we abbreviate all norms by $\Vert \cdot \Vert$  when there is no ambiguity.
For the sake of readibility, we avoid writing $\eta_k$ in the superscript of the inexact forward operators. 
Further, the following notation will be useful:
$$
[n] := n \textup{ mod } K.
$$

The recap on the \emph{RESESOP-Kaczmarz} procedure, presented in \cite{Blanke_20}, will be twofold: First, in case that all $\opA_k$ are known and exact data $g_k$ in the range of $\opA_k$, noted $\textup{Ran}(\opA_k)$, are available, we illustrate the concept of SESOP-Kaczmarz for solving this system of inverse problems.
Second, we recall how this method can be extended if only inexact forward operators and noisy data is available.

Beforehand, we give an important definition.
\begin{definition}\label{def:hyperplanes_stripes}
Let $u\in \X$ and $\alpha\in \mathbb{R}$. Define the corresponding \emph{hyperplane} via
\begin{equation*}
    H(u, \alpha) := \big \{ x\in \X: \langle u, x\rangle = \alpha \big \}
,\end{equation*}
and \emph{upper halfspace} via
\begin{equation*}
    H_>(u, \alpha) := \big \{ x\in \X: \langle u, x\rangle \geq \alpha \big \}.
\end{equation*}
In addition, for $\xi\geq 0$ we define the corresponding  \emph{stripe} as
\begin{equation*}
    H(u, \alpha, \xi) := \big \{ x\in \X: \vert \langle u, x\rangle - \alpha \vert \leq \xi \big \}.
\end{equation*}
Note that $ H(u, \alpha, \xi) \subset  H(u, \alpha) $ for all $\xi$.
\end{definition}

\subsubsection{SESOP-Kaczmarz for exact forward operators and data}

\begin{figure}[t]
     \centering
     \begin{subfigure}[]
         \centering
         \includegraphics[scale=0.8]{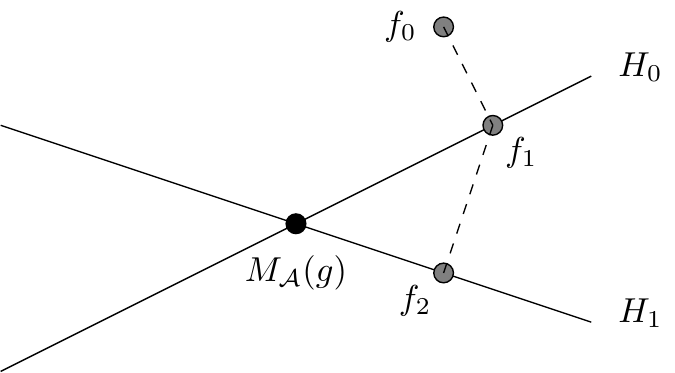}
     \end{subfigure}
     \begin{subfigure}[]
         \centering
         \includegraphics[scale=0.8]{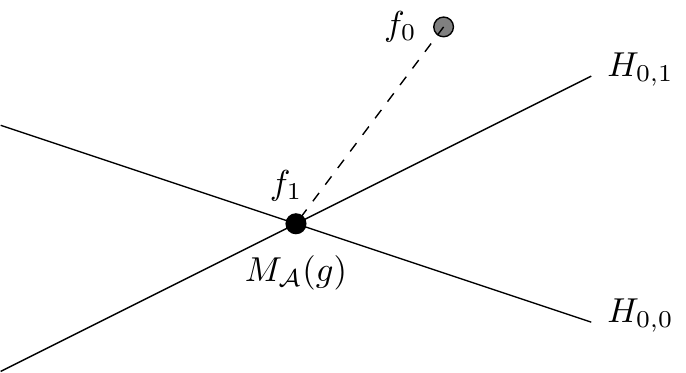}
     \end{subfigure}
     \caption{Iterative projection onto (a) hyperplanes and (b) intersection of hyperplanes}
    \label{fig:hyperplane_projection}
\end{figure}

We start with the observation that for any $w\in \Y_k$ the hyperplane
\begin{equation*}
    H(\opA_k^*w, \langle w, g_k\rangle) = \big \{ z \in \X: \langle w, \opA_k z\rangle = \langle w, g_k\rangle  \big \}
\end{equation*}
contains the solution set 
$$
M_{\opA}(g) := \big \{ z \in \X: \opA_j z = g_j \textup{ for all } j\big \}
$$
of the system of inverse problems.
Moreover, it follows from \emph{Riesz' Representation Theorem} \cite[Theorem 3.8.1]{Kreyzig} that 
\begin{equation*}
    \bigcap_{k=0}^{K-1} \bigcap_{w\in \Y_k} H(\opA_k^*w, \langle w, g_k\rangle) = M_\opA(g).
\end{equation*}
Therefore, the idea of Sequential Subspace Optimization is to choose $w_n\in \Y_{[n]}$ and iteratively project onto the corresponding hyperplanes $$H_n:= H(\opA_{[n]}^*w_n, \langle w_n, g_{[n]}\rangle).$$ That is, given a start iterate $f_0\in \X$, we set $f_n := \opPP_{H_n} (f_{n-1})$, where $\opPP_{H_n}$ denotes the orthogonal projection onto the closed convex set $H_n$, see Figure \ref{fig:hyperplane_projection}. 
The projection onto a single hyperplane can be computed by the following formula
\begin{lemma}\label{lem:projection_hyperplane}
Let $u\in X\setminus\{0\}$ and $\alpha\in \mathbb{R}$. Then the projection onto $H(u, \alpha)$ can be computed via
\begin{equation*}
    \opPP_{H(u, \alpha)}x = x - \frac{\langle u, x\rangle - \alpha}{\Vert u\Vert^2}u
\end{equation*}
for any $x\in \X$.
\end{lemma}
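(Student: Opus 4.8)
The plan is to verify directly that the right-hand side, call it $z := x - \frac{\langle u, x\rangle - \alpha}{\Vert u\Vert^2}u$, satisfies the two properties that characterize the metric projection of $x$ onto the (nonempty, closed, convex, in fact affine) set $H(u,\alpha)$: namely $z\in H(u,\alpha)$, and $x-z$ is orthogonal to $y-z$ for every $y\in H(u,\alpha)$. Throughout we of course assume $u\neq 0$, since otherwise $H(u,\alpha)$ is either empty or all of $\X$ and the formula is not even defined.

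First I would check membership by a one-line computation: $\langle u, z\rangle = \langle u,x\rangle - \frac{\langle u,x\rangle-\alpha}{\Vert u\Vert^2}\Vert u\Vert^2 = \alpha$, so indeed $z\in H(u,\alpha)$. Next, for an arbitrary $y\in H(u,\alpha)$ we have $\langle u, y-z\rangle = \alpha-\alpha = 0$, i.e. $y-z\in\{u\}^\perp$; and since $x-z = \frac{\langle u,x\rangle-\alpha}{\Vert u\Vert^2}u$ is a scalar multiple of $u$, it follows that $\langle x-z,\,y-z\rangle = 0$ for all $y\in H(u,\alpha)$.

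Finally I would invoke the standard characterization of the orthogonal projection onto a nonempty closed convex subset $C$ of a Hilbert space: $p=\opPP_C x$ if and only if $p\in C$ and $\langle x-p,\,y-p\rangle\leq 0$ for all $y\in C$. For the affine set $C=H(u,\alpha)$ this variational inequality is automatically an equality, because $2z-y$ also lies in $H(u,\alpha)$, which forces the reverse inequality; hence the two verified properties of $z$ yield $z=\opPP_{H(u,\alpha)}x$, and uniqueness of the projection finishes the proof. The argument is entirely routine; the only points requiring a moment's attention are the degenerate case $u=0$ and the (standard) reduction of the convex variational inequality to an orthogonality relation in the affine case.
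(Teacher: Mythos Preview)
Your argument is correct: you verify that $z$ lies in $H(u,\alpha)$ and that $x-z$ is orthogonal to the affine subspace $H(u,\alpha)-z$, which is precisely the characterization of the metric projection onto a closed affine set in a Hilbert space. The paper itself states this lemma without proof, treating it as a standard fact, so there is no proof in the paper to compare against; your write-up simply supplies the routine verification that the authors omitted.
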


%\begin{remark}
Instead of projecting onto a single hyperplane at each iteration, projecting onto the intersection of multiple hyperplanes may significantly increase the convergence rate, see \cite{Schoepfer_08}, and leads to multiple search directions, see \cite{Blanke_20}. This effect is also illustrated in Figure \ref{fig:hyperplane_projection}. However, we did not observe empirically a significant benefit in the convergence rate. This is the reason why we consider below only one search direction.

\subsubsection{RESESOP-Kaczmarz for inexact forward operator}
Let us now assume that only noisy data $g^\delta_{k}$ with noise levels $\delta_k \geq \Vert g_{k} - g^\delta_{k}\Vert$ and inexact forward operators $\opA^\eta_{k}$ are available, for $k\in \{0,1,...K-1\}$. At this point, we also set some convenient notation
 \begin{eqnarray*}
 \eta := (\eta_0,...,\eta_{K-1})^T \in \mathbb{R}^K, \\
 \delta := (\delta_0,...,\delta_{K-1})^T \in \mathbb{R}^K.
 \end{eqnarray*}
We further make the assumption that for some constant $\rho >0$ the restricted solution set 
\begin{equation*}
    M^\rho_\opA(g) := M_\opA(g) \cap B_\rho(0)
\end{equation*}
is non-empty, that means there is a solution whose norm is smaller than $\rho$.\\

\par The main issue is that for $w\in \Y_k$ the preceding hyperplanes $H((\opA_{[n]}^\eta)^*w, \langle w, g^\delta_{[n]} \rangle)$ may no longer contain the restricted solution set of the respective subproblem and hence, neither $M^\rho_\opA(g)$. This problem is tackled by projecting onto stripes instead of hyperplanes whose thickness is chosen in accordance with the level of noise $\delta$ and the model inexactness $\eta$, see Figure  \ref{fig:hyperplane_to_stripe}. This approach combined with the discrepancy principle is summarized in the following algorithm.

\begin{figure}
    \centering
    \includegraphics[scale=0.95]{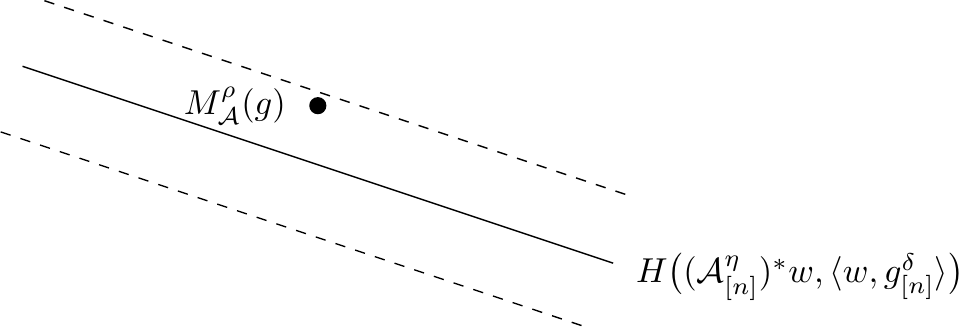}
    \caption{Increasing the thickness of the hyperplane so that the resulting stripe contains the restricted solution set}
    \label{fig:hyperplane_to_stripe}
\end{figure}

\begin{algorithm}[Similar to Algorithm 2.7. in \cite{Blanke_20}]\label{algo:resesop}
    Choose an initial value $f_0 := f_0^{\eta, \delta}\in B_\rho(0)\subset \X$ and a constant $\tau > 1$. 
    If the current iterate $f_n^{\eta, \delta}$ fulfills the discrepancy principle for the current subproblem, i.e.
    \begin{equation}\label{eq:discrepancy_principle}
        \Vert \opA^\eta_{[n]} f_n^{\eta, \delta} -g^\delta_{[n]} \Vert \leq \tau ( \rho\eta_{[n]} + \delta_{[n]}),
    \end{equation}
    set $f_{n+1}^{\eta, \delta} := f_n^{\eta, \delta}$. Otherwise set $w_{n}^{\eta, \delta} := \opA^\eta_{[n]} f_n^{\eta, \delta} - g^\delta_{[n]}$ and compute the next iterate via
    \begin{equation*}
        f_{n+1}^{\eta, \delta} := \opPP_{H_n^{\eta,\delta}} f_n^{\eta, \delta},
    \end{equation*}
    i.e. by projecting onto the stripe 
    \begin{equation*}
        H_n^{\eta, \delta} := H(u_n^{\eta, \delta}, \alpha_n^{\eta, \delta}, \xi_n^{\eta, \delta}),
    \end{equation*}
    where
    \begin{eqnarray*}
        u_n^{\eta, \delta} :=  (\opA_{[n]}^\eta)^* w_n^{\eta, \delta},\\
        \alpha_n^{\eta, \delta} := \langle w_n^{\eta, \delta},  g^\delta_{[n]}\rangle, \\
        \xi_n^{\eta, \delta} := (\rho\eta_{[n]} + \delta_{[n]})  \Vert w_n^{\eta, \delta} \Vert. 
    \end{eqnarray*}
    Stop iterating as soon as $f_{n+k} = f_n$ for all $k\in \{0, ..., K-1\}$.
\end{algorithm}

Note that for $(\eta, \delta) = 0$, the previous algorithm is just the SESOP-Kaczmarz procedure from the previous section. In this case we will omit all superindices and write for example $f_n$ instead of $f_n^{\eta, \delta}$.
As shown in \cite{Blanke_20}, by the construction of the stripe $H_n^{\eta, \delta}$ , it contains the restricted solution set $M^\rho_{\opA}(g):= M_\opA(g)\cap B_\rho(0).$

Furthermore, it might happen that $\Vert u_n^{\eta, \delta} \Vert = 0$ although the discrepancy principle (\ref{eq:discrepancy_principle}) is not fulfilled. In that case the stripe $H_n^{\eta, \delta}$ may be the empty set and hence the iteration step not well-defined. This leads to the following definition: 
\begin{definition}
    We call a start iterate $f^{\eta, \delta}_0$ of Algorithm \ref{algo:resesop} to be \emph{feasible} if $u_n^{\eta, \delta} \neq 0$, whenever the discrepancy principle (\ref{eq:discrepancy_principle}) is not fullfilled.
\end{definition}
However, it is to be noted here that if $(\opA_{[n]}^\eta)^*$ is injective, or equivalently $\opA_{[n]}^\eta$ is surjective, all start iterates are feasible.\\

\par 
Regarding the computation of the projection onto the stripe $H(u_n^{\eta, \delta}, \alpha^{\eta, \delta}_n, \xi^{\eta, \delta}_n)$ we have the following result.
\begin{lemma}\label{lem:projection_stripe}
If the iterate $f_n^{\eta, \delta}$ of Algorithm \ref{algo:resesop} does not fulfill the discrepancy principle (\ref{eq:discrepancy_principle}), then it holds that
\begin{equation*}\fl  
f^{\eta, \delta}_{n+1} = f^{\eta, \delta}_n - t^{\eta, \delta}_n u^{\eta, \delta}_n
\qquad
\text{with}
\qquad
    t^{\eta, \delta}_n := \frac{\langle u^{\eta, \delta}_n, f^{\eta, \delta}_n\rangle - (\alpha^{\eta, \delta}_n+\xi^{\eta, \delta}_n)}{\Vert u^{\eta, \delta}_n \Vert^2}.
\end{equation*}
\end{lemma}
\begin{proof}
Due to Proposition 3.5 in \cite{Blanke_20} we have $f^{\eta, \delta}_n \in H_>(u^{\eta, \delta}_n, \alpha^{\eta, \delta}_n+\xi^{\eta, \delta}_n)$. Hence, the claim follows from Lemma \ref{lem:projection_hyperplane}.
\end{proof}

\begin{remark}
Due to the form of $f^{\eta, \delta}_{n+1}$, the $u^{\eta, \delta}_n$ are also called \emph{search directions}.
\end{remark}

Next we state two theorems from \cite{Blanke_20}, which will be important in the next section.
The first one is about the convergence of the SESOP-Kaczmarz iteration.

\begin{theorem}{\cite[Theorem 3.3]{Blanke_20}}\label{thm:sesop_convergence}
    For $(\eta, \delta) = 0$ let $\{f_n\}_{n\in \mathbb{N}}$ be the sequence generated by Algorithm \ref{algo:resesop} for a feasible initial value $f_0\in \X$. If the parameters $t_n$ from Lemma \ref{lem:projection_stripe} are bounded, then it holds that
    \begin{equation*}
        \lim_{n\to \infty} f_n = \opPP_{M_\opA(g)} (f_0),
    \end{equation*}
    i.e. $(f_n)_{n\in \mathbb{N}}$ strongly converges to the projection of $f_0$ onto the solution set $M_{\opA}(g)$.
\end{theorem}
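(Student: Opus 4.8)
The plan is to prove that, in the exact regime $(\eta,\delta)=0$, Algorithm~\ref{algo:resesop} produces a Fej\'er-monotone sequence with respect to the solution set $M_\opA(g)$ and then to identify its limit. First I would note that here $\xi_n=0$ and $g^\delta_{[n]}=g_{[n]}\in R(\opA_{[n]})$, so the stripe $H_n$ collapses to the hyperplane $H(u_n,\alpha_n)$, which contains $M_\opA(g)$ by the observation made before Lemma~\ref{lem:projection_hyperplane}. Since $f_{n+1}=\opPP_{H_n}f_n$ is the orthogonal projection of $f_n$ onto this closed affine set, the variational inequality for the projection gives, for every $z\in M_\opA(g)$,
\begin{equation*}
\|f_{n+1}-z\|^2\le\|f_n-z\|^2-\|f_{n+1}-f_n\|^2 .
\end{equation*}
From this I would deduce that $\|f_n-z\|$ is non-increasing -- hence $\{f_n\}$ is bounded and $\lim_n\|f_n-z\|$ exists for every $z\in M_\opA(g)$ -- and that $\sum_n\|f_{n+1}-f_n\|^2<\infty$, so $\|f_{n+1}-f_n\|\to 0$. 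Next, whenever $w_n\neq 0$ Lemma~\ref{lem:projection_hyperplane} gives $f_{n+1}-f_n=-t_nu_n$ with $t_n=\|w_n\|^2/\|u_n\|^2$, using $\langle u_n,f_n\rangle-\alpha_n=\langle w_n,\opA_{[n]}f_n-g_{[n]}\rangle=\|w_n\|^2$; combining $\|u_n\|=\|\opA_{[n]}^*w_n\|\le\|\opA_{[n]}\|\,\|w_n\|$ with the previous summability yields $\|w_n\|^2\le\|\opA_{[n]}\|^2\|f_{n+1}-f_n\|^2\to 0$, i.e. the residuals $\opA_{[n]}f_n-g_{[n]}\to 0$.

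Second, I would show that every weak cluster point of $\{f_n\}$ solves the full system. Let $f_{n_j}\rightharpoonup\bar f$ and fix $k\in\{0,\dots,K-1\}$. Shifting each index $n_j$ by the amount $r_j\in\{0,\dots,K-1\}$ needed so that $[n_j+r_j]=k$, the estimate $\|f_{n+1}-f_n\|\to 0$ gives $\|f_{n_j+r_j}-f_{n_j}\|\to 0$, hence $f_{n_j+r_j}\rightharpoonup\bar f$ too; along this subsequence $\opA_k f_{n_j+r_j}-g_k\to 0$ strongly, while $\opA_k f_{n_j+r_j}\rightharpoonup\opA_k\bar f$ by weak continuity of the bounded operator $\opA_k$, so $\opA_k\bar f=g_k$. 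As $k$ is arbitrary, $\bar f\in M_\opA(g)$.

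Third, I would identify the limit. From $f_n=f_0-\sum_{j<n}t_j u_j$ with $u_j\in R(\opA_{[j]}^*)\subseteq N(\opA)^\perp$, where $N(\opA):=\bigcap_k N(\opA_k)$, it follows that $f_n-f_0\in N(\opA)^\perp$ for all $n$; since $N(\opA)^\perp$ is closed, hence weakly closed, any weak cluster point satisfies $\bar f-f_0\in N(\opA)^\perp$. Because $M_\opA(g)$ is a non-empty affine set with direction $N(\opA)$, it meets $f_0+N(\opA)^\perp$ in exactly one point, which is $\opPP_{M_\opA(g)}(f_0)$; together with the previous paragraph this forces every weak cluster point to equal $f^\star:=\opPP_{M_\opA(g)}(f_0)$, so the bounded sequence converges weakly, $f_n\rightharpoonup f^\star$. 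Finally I would upgrade this to $\|f_n-f^\star\|\to 0$: $\|f_n-f^\star\|$ converges by the first step, and combining this with $f_n\rightharpoonup f^\star$ and the boundedness of $(t_n)$ -- which keeps the search directions $u_n$ under control -- gives strong convergence along the lines of Theorem~3.3 in \cite{Blanke_20}. In the setting of this paper, where $\Y_k=\R$, this last point is automatic: then $f_n-f_0$ stays in the finite-dimensional subspace spanned by $\opA_0^*1,\dots,\opA_{K-1}^*1$, on which weak and norm convergence coincide.

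The step I expect to be the main obstacle is exactly this final passage from weak to strong convergence. Fej\'er monotonicity and the decay of the residuals only yield weak subsequential limits for free; promoting this to norm convergence of the entire sequence is where the boundedness hypothesis on $(t_n)$ genuinely enters in general Hilbert spaces, and it is the part that trivialises in the finite-dimensional data situation relevant to the present application.
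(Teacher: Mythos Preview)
The paper does not give its own proof of this theorem: it is stated there explicitly as ``a special case of Theorem~3.3 in \cite{Blanke_20}'' and left at that. So there is nothing in the paper to compare against except the citation itself.

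Your argument is the standard Fej\'er-monotonicity route and is essentially what one finds in the SESOP literature (and presumably in the cited reference). The first four steps---the Pythagorean decrease $\|f_{n+1}-z\|^2\le\|f_n-z\|^2-\|f_{n+1}-f_n\|^2$ for $z\in M_\opA(g)$, the residual estimate $\|w_n\|\le\|\opA_{[n]}\|\,\|f_{n+1}-f_n\|\to 0$, the Kaczmarz shifting to show every weak cluster point solves all subproblems, and the identification of the unique cluster point via $f_n-f_0\in N(\opA)^\perp$---are all correct and cleanly done. Your diagnosis that the weak-to-strong passage is the genuine obstacle is accurate: Fej\'er monotonicity plus a unique weak cluster point in the target set yields only weak convergence in general Hilbert spaces, and the combination ``$\|f_n-f^\star\|$ converges'' with ``$f_n\rightharpoonup f^\star$'' does \emph{not} by itself force $\|f_n-f^\star\|\to 0$. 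That step does require an additional argument (a direct Cauchy estimate in \cite{Blanke_20}, where the bound on $t_n$ enters), and you are right to flag it rather than paper over it. It is worth noting that the authors themselves remark, in a marginal comment right after the theorem, that they believe the boundedness hypothesis on $t_n$ is actually redundant.

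Your closing observation---that in the present paper's setting $\Y_k=\R$, so all search directions lie in the finite-dimensional span of $\opA_0^*1,\dots,\opA_{K-1}^*1$ and weak convergence is automatically strong---is a nice and relevant shortcut that the paper does not make explicit.
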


Before we state that the RESESOP-Kaczmarz method is indeed a regularization of the inverse problems $\opA_{k} f = g^\delta_{k},\ k\in\{0,\ldots,K-1\}$, it is to be noted here that whenever $(\eta, \delta)\neq0$ Algorithm \ref{algo:resesop} terminates after a finite number of iterations according to \cite[Lemma 3.7.]{Blanke_20}, that is 
\begin{equation}\fl
    n_*(\eta, \delta):= \min \Big \{ n\in \mathbb{N}: \Vert \opA^\eta_{[n']} f_{n'}^{\eta, \delta} - g_{[n']}^\delta \Vert \leq \tau(\rho\eta_{[n']} + \delta_{[n']}), \forall n' = n,...,n+K-1 \Big \}
\end{equation}
is finite. It is called \emph{finite stopping index}, but note that it is called auxilliary stopping index in \cite{Blanke_20}.

\begin{theorem}{\cite[Theorem 3.9.]{Blanke_20}}\label{thm:RESESOP_regularization}
Let $((\eta, \delta)_l)_{l\in \N}$ be a null-sequence
and $(\eta, \delta)_l\neq 0$ for all $l$. Given a feasible start iterate $f_0\in B_{\rho}(0)$, let $f_{n_*(\eta, \delta)_l}^{(\eta, \delta)_l}$ be the outcome of Algorithm \ref{algo:resesop}. Assume that the parameters $t_n^{(\eta, \delta)_l}$ from Lemma \ref{lem:projection_stripe} are bounded with respect to $n, l \in \mathbb{N}$. Then it holds that
\begin{equation*}
\lim_{l\to\infty}    f^{(\eta, \delta)_l}_{n_*(\eta, \delta)_l} = P_{M_\opA^\rho(g)}(f_0).
\end{equation*}
\end{theorem}

\subsection{Restriction of the domain space}
As mentioned in Section 2 we aim for an approximate solution of a semi-discrete inverse problem by considering a fully-discrete version of it. Therefore, in this section we assume $\Y$ to be finite dimensional and inspect what happens if we restrict the domain space $\X$ to some closed subspace $\X_j$.
To simplify the notation, we consider only a single forward operator $\opA, \opA^\eta:X\to \Y$ in the following and denote the exact data by $g\in \textup{Ran}(\opA)$. However, note that together with ideas from \cite{Blanke_20} it might be possible to adapt the results in this section to the case of multiple forward operators. 
The restrictions of $\opA$ and $\opA^\eta$ to $\X_j$ are denoted by
\begin{equation*}
    \opA^j, \opA^{\eta,j}: \X_j \longrightarrow \Y,
\end{equation*}
respectively. There restrictions are also linear and bounded operators between Hilbert spaces $\X_j$ and $\Y$.\\

\par 
In the first part of this section, we want to apply the RESESOP-Kaczmarz method to the restricted operators and use the preceding theory to observe in Corollary \ref{coro:regularization_fixed_j} that for fixed subspace $\X_j$, this yields under some assumption to a regularized solution of the semi-discrete inverse problem $\opA f = g^\delta$. 
In the second part, we prove stability with respect to the chosen subspace in Theorem \ref{thm:full_semi_regularization}.

\subsubsection{RESESOP-Kaczmarz applied to a restricted forward operator}\label{sec:RESESOP_restricted}
Throughout this section we make the assumption that

\begin{equation}\label{eq:g_in_range_Aj}
     g\in \textup{Ran}(\opA^j),
\end{equation}
which seems to be restrictive at first, but as $\Y$ is finite dimensional, the restricted operator $\opA^j$ has even a high chance of being surjective if the dimension of $\X_j$ is sufficiently large.
The assumption (\ref{eq:g_in_range_Aj}) implies that there exists some $\rho_j>0$ such that
$$ M^{\rho_j}_{\opA^j}(g) := M_{\opA^j}(g) \cap B_{\rho_j}(0) \neq \emptyset, $$
i.e. the (restricted) solution set is non-empty.

For start iterates $f_0^j := f_0^{\eta, \delta, j}\in \X_j$, we apply the RESESOP Algorithm \ref{algo:resesop} to the operator $\opA^{\eta, j}$ and extend the notation in Algorithm \ref{algo:resesop} by an additional superindex $j$, that is, we denote the iterates by $f_n^{\eta, \delta, j}$
and further set
\begin{eqnarray*}
    w_n^{\eta, \delta, j} = \opA^{\eta,j} f^{\eta, \delta, j}_n - g^\delta,\\
    u_n^{\eta, \delta, j} = (\opA^{\eta,j})^*u_n^{\eta, \delta, j},\\
    \alpha_n^{\eta, \delta, j} = \langle w_n^{\eta, \delta, j}, g^\delta \rangle,\\
    \xi_n^{\eta, \delta, j} = \Vert w_n^{\eta, \delta, j}\Vert (\eta \rho + \delta).
\end{eqnarray*}
Also, we replace $\rho$ by $\rho_j$ so that the stripes $H_n^{\eta, \delta, j}$ will contain the restricted solution set $M^{\rho_j}_{\opA^j}(g)$.  Again, if $(\eta, \delta) = 0$, we omit them in the superindex, for example, we then write $f_n^j$ instead of $f_n^{0,0,j}$. The theory presented in Section \ref{sec:RESESOP_recall} is also applicable to the restricted operators $\opA^j$ and $\opA^{\eta, j}$, which also means that for $(\eta, \delta)\neq 0$ there is a finite stopping index
\begin{equation}\label{eq:stopping_index}
    n_*(\eta, \delta, j) := \min \Big \{ n: \Vert \opA^{\eta,j} f^{\eta, \delta, j}_n - g^\delta\Vert \leq \tau(\rho_j\eta + \delta)\Big \}\in \mathbb{N}.
\end{equation}
and for $n\geq n_*(\eta, \delta, j)$ it holds that $f^{\eta, \delta, j}_n = f^{\eta, \delta, j}_{n_*(\eta, \delta, j)}$.
Moreover, from Theorem \ref{thm:sesop_convergence} and Theorem \ref{thm:RESESOP_regularization} we immediately obtain the following result: 

\begin{corollary}
    For $(\eta, \delta) = 0$ let $(f_n^j)_{n\in \mathbb{N}}$ be the sequence generated by Algorithm \ref{algo:resesop} for a feasible initial value $f_0^j\in \X$. If the parameters $t_n^j$ from Lemma \ref{lem:projection_stripe} are bounded, then it holds that
    \begin{equation*}
        \lim_{n\to \infty} f_n^j = \opPP_{M_{\opA^j}(g)} (f_0^j)\in  M_{\opA}(g),
    \end{equation*}
    i.e. $(f_n^j)_{n\in \mathbb{N}}$ strongly converges to the projection of $f_0^j$ onto the solution set $M_{\opA^j}(g)$.
\end{corollary}
This means that in case of $(\eta, \delta)=0$, the application of the SESOP algorithm to the resctricted operator $\opA^j$ indeed converges to a solution of $\opA f = g$.
 Further, in case of model uncertainty or noisy data, applying RESESOP to  $\opA^{\eta,j}$ regularizes the inverse problem regarding $\opA$ in the following way.
\begin{corollary}\label{coro:regularization_fixed_j}
    Let $((\eta, \delta)_l)_{l\in\N}$ be a null sequence and $(\eta, \delta)_l\neq 0$ for all $l$. Further, for some feasible start iterate $f^j_0\in B_{\rho_j}(0)$ let $f_{n_*(\eta_l, \delta_l,j)}^{\eta_l, \delta_l,j}$ be the outcome of Algorithm \ref{algo:resesop}. Assume that the parameters $t_n^{\eta_l, \delta_l,j}$ from Lemma \ref{lem:projection_stripe} are bounded with respect to $n, l \in \mathbb{N}$. Then it holds that
    \begin{equation*}
    \lim_{l\to\infty}    f^{\eta_l, \delta_l, j}_{n_*(\eta_l, \delta_l, j)} = P_{M^{\rho_j}_{\opA^{j}}(g)} (f^j_0)\in  M^{\rho_j}_{\opA}(g).
    \end{equation*}
\end{corollary}
However, at this point it is not clear, whether the RESESOP reconstruction is stable with respect to the chosen subspace $\X_j$. This is analyzed in the next subsection. \\

\par 
We recall now a descent property for the RESESOP iterates from \cite{Blanke_20}, which will be helpful for the analysis in the next subsection.
\begin{proposition}{\cite[Proposition 3.5.]{Blanke_20}}\label{prop:descent_property}
    If $\Vert \opA^{\eta,j} f^{\eta, \delta, j}_n - g^\delta \Vert > \tau (\rho_j \eta + \delta)$, then it holds that
    \begin{itemize}
        \item[a)] $f_n^{\eta, \delta, j}$ is contained in the half-space $ H_>(u_n^{\eta, \delta, j}, \alpha_n^{\eta, \delta, j} + \xi_n^{\eta, \delta, j})$.
        \item[b)] For all $z\in H(u_n^{\eta, \delta, j}, \alpha_n^{\eta, \delta, j}, \xi_n^{\eta, \delta, j})$ it holds that $$\Vert z - f_{n+1}^{\eta, \delta, j} \Vert^2 \leq \Vert z - f_{n}^{\eta, \delta, j}\Vert^2 -\left( \frac{\Vert w_n^{\eta, \delta, j} \Vert \big(\Vert w_n^{\eta, \delta, j} \Vert - (\rho\eta + \delta) \big) }{\Vert u_n^{\eta, \delta, j} \Vert}\right)^2.$$
    \end{itemize}
    In particular, b) holds for all elements $z$ of the restricted solution set $M^{\rho_j}_{\opA^j}(g)$.
\end{proposition}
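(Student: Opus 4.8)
The plan is to mimic the proof of Proposition 3.5 in \cite{Blanke_20}, but to check carefully that every step survives the restriction to the closed subspace $\X_j$. The key observation, which makes this possible, is Lemma \ref{lem:adjoint_restriction}: the search direction $u_n^{\eta,\delta,j} = (\opA_j^\eta)^* w_n^{\eta,\delta,j}$ is the orthogonal projection onto $\X_j$ of the ``ambient'' search direction $(\opA^\eta)^* w_n^{\eta,\delta,j}$. Since all iterates $f_n^{\eta,\delta,j}$ live in $\X_j$ by construction (the projection $\opPP_{H_n^{\eta,\delta,j}}$ moves along $u_n^{\eta,\delta,j} \in \X_j$ starting from $f_0^j \in \X_j$), every inner product $\langle u_n^{\eta,\delta,j}, f_n^{\eta,\delta,j}\rangle$ that appears can equivalently be computed with $(\opA^\eta)^* w_n^{\eta,\delta,j}$ in place of $u_n^{\eta,\delta,j}$. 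In other words, the restricted algorithm is literally the unrestricted RESESOP algorithm for $\opA^\eta$ with the extra bookkeeping that iterates stay in $\X_j$, so the estimates of \cite{Blanke_20} apply verbatim.

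For part a), I would start from the definition $w_n^{\eta,\delta,j} = \opA_j^\eta f_n^{\eta,\delta,j} - g^\delta$ and compute
\[
\langle u_n^{\eta,\delta,j}, f_n^{\eta,\delta,j}\rangle
= \langle (\opA_j^\eta)^* w_n^{\eta,\delta,j}, f_n^{\eta,\delta,j}\rangle
= \langle w_n^{\eta,\delta,j}, \opA_j^\eta f_n^{\eta,\delta,j}\rangle
= \Vert w_n^{\eta,\delta,j}\Vert^2 + \langle w_n^{\eta,\delta,j}, g^\delta\rangle,
\]
so that $\langle u_n^{\eta,\delta,j}, f_n^{\eta,\delta,j}\rangle - \alpha_n^{\eta,\delta,j} = \Vert w_n^{\eta,\delta,j}\Vert^2$. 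The hypothesis $\Vert w_n^{\eta,\delta,j}\Vert = \Vert \opA_j^\eta f_n^{\eta,\delta,j} - g^\delta\Vert > \tau(\rho\eta+\delta) > \rho\eta+\delta$ then gives $\Vert w_n^{\eta,\delta,j}\Vert^2 > \Vert w_n^{\eta,\delta,j}\Vert(\rho\eta+\delta) = \xi_n^{\eta,\delta,j}$, hence $\langle u_n^{\eta,\delta,j}, f_n^{\eta,\delta,j}\rangle \geq \alpha_n^{\eta,\delta,j} + \xi_n^{\eta,\delta,j}$, which is exactly membership in $H_>(u_n^{\eta,\delta,j}, \alpha_n^{\eta,\delta,j}+\xi_n^{\eta,\delta,j})$.

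For part b), fix $z$ in the stripe $H(u_n^{\eta,\delta,j},\alpha_n^{\eta,\delta,j},\xi_n^{\eta,\delta,j})$ and use Lemma \ref{lem:projection_stripe}, which gives $f_{n+1}^{\eta,\delta,j} = f_n^{\eta,\delta,j} - t_n u_n^{\eta,\delta,j}$ with $t_n = (\langle u_n^{\eta,\delta,j}, f_n^{\eta,\delta,j}\rangle - (\alpha_n^{\eta,\delta,j}+\xi_n^{\eta,\delta,j}))/\Vert u_n^{\eta,\delta,j}\Vert^2$. Expanding $\Vert z - f_{n+1}^{\eta,\delta,j}\Vert^2 = \Vert z - f_n^{\eta,\delta,j}\Vert^2 + 2 t_n \langle u_n^{\eta,\delta,j}, z - f_n^{\eta,\delta,j}\rangle + t_n^2 \Vert u_n^{\eta,\delta,j}\Vert^2$ and using $\langle u_n^{\eta,\delta,j}, z\rangle \leq \alpha_n^{\eta,\delta,j}+\xi_n^{\eta,\delta,j}$ (from the stripe bound) together with the computation $\langle u_n^{\eta,\delta,j}, f_n^{\eta,\delta,j}\rangle = \alpha_n^{\eta,\delta,j} + \Vert w_n^{\eta,\delta,j}\Vert^2$ from part a), one finds $\langle u_n^{\eta,\delta,j}, z - f_n^{\eta,\delta,j}\rangle \leq \xi_n^{\eta,\delta,j} - \Vert w_n^{\eta,\delta,j}\Vert^2$ and $t_n \Vert u_n^{\eta,\delta,j}\Vert^2 = \Vert w_n^{\eta,\delta,j}\Vert^2 - \xi_n^{\eta,\delta,j} = \Vert w_n^{\eta,\delta,j}\Vert(\Vert w_n^{\eta,\delta,j}\Vert - (\rho\eta+\delta)) > 0$. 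Substituting and simplifying yields the stated bound with the subtracted term $\big(\Vert w_n^{\eta,\delta,j}\Vert(\Vert w_n^{\eta,\delta,j}\Vert-(\rho\eta+\delta))/\Vert u_n^{\eta,\delta,j}\Vert\big)^2$. Finally, for the last sentence, any $z \in M_{\opA_j}(g)$ satisfies $\opA_j z = g$, and one checks $\vert \langle u_n^{\eta,\delta,j}, z\rangle - \alpha_n^{\eta,\delta,j}\vert = \vert \langle w_n^{\eta,\delta,j}, \opA_j^\eta z - g^\delta\rangle\vert = \vert\langle w_n^{\eta,\delta,j}, (\opA_j^\eta - \opA_j)z + (g - g^\delta)\rangle\vert \leq \Vert w_n^{\eta,\delta,j}\Vert(\eta\Vert z\Vert + \delta) \leq \Vert w_n^{\eta,\delta,j}\Vert(\rho\eta+\delta) = \xi_n^{\eta,\delta,j}$, using $\Vert z\Vert \leq \rho$ from $z \in M_{\opA_j}^\rho(g)$; so $M_{\opA_j}(g) \cap B_\rho(0)$ lies in the stripe and b) applies. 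The main obstacle is essentially a bookkeeping one: making sure the restricted adjoint $(\opA_j^\eta)^*$ and the operator-norm bound $\Vert \opA_j^\eta - \opA_j\Vert \leq \eta$ behave as expected under restriction (both follow from Lemma \ref{lem:adjoint_restriction} and the fact that restricting an operator to a subspace does not increase its norm), and keeping the triple-superscript notation consistent; the analytic content is identical to \cite{Blanke_20}.
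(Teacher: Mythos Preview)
Your proof is correct and complete. The paper itself does not supply a proof for this proposition at all: it is stated with the attribution ``See Proposition 3.5.\ in \cite{Blanke_20}'' and nothing further, relying on the fact that the argument there carries over unchanged to the restricted operators $\opA_j^\eta$. What you have done is spell out that verification in full---the computation $\langle u_n^{\eta,\delta,j}, f_n^{\eta,\delta,j}\rangle - \alpha_n^{\eta,\delta,j} = \Vert w_n^{\eta,\delta,j}\Vert^2$, the expansion of $\Vert z - f_{n+1}^{\eta,\delta,j}\Vert^2$ via Lemma \ref{lem:projection_stripe}, and the check that the restricted solution set lies in the stripe---so your write-up is strictly more informative than what the paper provides. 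One minor point: the proposition as stated says ``all elements $z$ of $M_{\opA_j}(g)$'', but your argument (correctly) only establishes the stripe inclusion for $z$ with $\Vert z\Vert\leq\rho$, i.e.\ for $M_{\opA_j}^\rho(g)$; this is a slight imprecision in the proposition's wording rather than in your proof.
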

The following Lemma addresses the computation of the adjoint of the restricted forward operator.

\begin{lemma}\label{lem:adjoint_restriction}
For all $y\in \Y$ it holds that $(\opA^j)^*y = \opPP_{\X_j} \opA^*y$ and similarly for $(\opA^{\eta,j})^*$.
\end{lemma}
\begin{proof}
Let $y\in \Y$. For all $v \in \X_j$ it holds that
\begin{eqnarray*}
    \langle (\opA^j)^*y - \opA^* y, v\rangle = \langle y, \opA^j v - \opA v\rangle = \langle y, \opA v - \opA v\rangle = 0.
\end{eqnarray*}
Therefore, $(\opA^j)^*y - \opA ^*y$ is orthogonal to $\X_j$. As $(\opA^j)^*y \in \X_j$, we conclude
\begin{equation*}
    0 = \opPP_{X_j} ( (\opA_j)^*y - \opA^*y ) = \opPP_{X_j} (\opA^*y) -  (\opA^j)^*y.
\end{equation*}
\end{proof}

\subsubsection{Stability with respect to the chosen subspace}\label{sec:RESESOP_restricted_stability}
In this section we consider a nested sequence of closed subspaces $\X_j$ of $\X$, i.e. \begin{equation}\label{assum:nestedness}
    \X_j \subset \X_{j+1} \textup{ for all } j\in \mathbb{N}.
\end{equation}
Further, we assume that
\begin{eqnarray}\label{assum:contain_Nperp} 
    N(\opA)^\perp \subseteq \overline{\bigcup_j \X_j}
\end{eqnarray}
and make the stronger assumption, compared to section \ref{sec:RESESOP_restricted}, that there exists some $J\in \mathbb{N}$ such that the restriction $\opA^J$ of $\opA$ to $\X_J$ is surjective. Due to the nestedness (\ref{assum:nestedness}), it follows that $(\opA^j)$ is surjective for $j\geq J.$ Therefore, without loss of generality we assume all $\opA^j$ to be surjective. Furthermore, it is to be noted here that for closed subspaces $V\subseteq \X_j$ the expression $V^\perp$ stands for the orthogonal complement in $\X$. The orthogonal complement in $\X_j$ is denoted by $V^{\perp_j}$.\\

\par 
The main goal of this section is to prove the following result:

\begin{theorem}\label{thm:full_semi_regularization}
    Let $f_0^j:= f_0^{\eta, \delta, j}\in N(\opA^j)^{\perp_j}\cap B_{\rho_j}(0)$, $j \in\N$, be start iterates for the RESESOP method applied to $\opA^{\eta,j}$ and assume that $f_0^j$ converges to some start iterate $f_0\in N(\opA)^\perp\cap B_{\rho'}(0)$ for the SESOP method applied to $\opA$. Given some sequence $((\eta, \delta, j)_l)_{l\in N}$ converging to $(0,0,\infty)$ we assume that the parameters $t_n^{(\eta, \delta,j)_l}$ and $t_n$ from Lemma \ref{lem:projection_stripe} are bounded with respect to $n, l \in \mathbb{N}$. Then it holds that
    \begin{equation*}
        \lim_{l\to\infty} f_{n_*(l)}^{(\eta, \delta, j)_l} = \opPP_{M_\opA(g)}f_0,
    \end{equation*}
    where $n_*(l) := n_*((\eta, \delta, j)_l)$ is the finite stopping index from (\ref{eq:stopping_index}).
\end{theorem}
\begin{remark}
     We say that a sequence $(\eta, \delta, j)_l$ is convergent to $(0,0,\infty)$, if for all $\varepsilon>0$ and $N>0$ there exists some $L\in \mathbb{N}$ such that for all $l\geq L$
     \begin{equation*}
         \vert \eta_l\vert < \varepsilon, \vert \delta_l\vert < \varepsilon \textup{ and } j_l > N.
     \end{equation*}
     Moreover, we want to emphasize again that the assumption of $\opA^j$ being surjective implies that all start iterates $f_0^j:= f_0^{\eta, \delta, j}\in \X_j$ and $f_0\in \X$, respectively, are feasible. Therefore, we omitted these conditions in Theorem \ref{thm:full_semi_regularization}.
     \end{remark}

In order to prove this theorem, some preparations are required. First, we inspect the projections onto solution sets.

\begin{lemma}\label{lem:generalized_inverse}
    Let $f_0 \in N(\opA)^\perp$. Then it holds that 
    \begin{equation*}
        \opPP_{M_\opA(g)} f_0 = \opA^+ g,
    \end{equation*}
    where $\opA^+$ denotes the \emph{generalized inverse} of $\opA$, see for example \cite[Definition 2.1.5.]{Rieder}.
    By analogy, it holds that 
    \begin{equation*}
        \opPP_{M_{\opA^j}(g)} f_0^j = (\opA^j)^+ g,
    \end{equation*}
    for any $f_0^j\in N(\opA^j)^{\perp}.$
\end{lemma}
\begin{proof}
    Let $f \in M_\opA(g)$. As the solution set is an affine set, namely $M_\opA(g) = f + N(\opA)$, the corresponding orthogonal projection can be computed via
    \begin{eqnarray*}
        \opPP_{M_\opA(g)} f_0 = f + \opPP_{N(\opA)}(f_0 - f).
    \end{eqnarray*}
    Due to $f_0\in N(\opA)^\perp$ we conclude
    \begin{eqnarray*}
        \Vert \opPP_{M_\opA(g)} f_0 \Vert = \Vert f - \opPP_{N(\opA)}f\Vert= \Vert \opPP_{N(\opA)^\perp}f  \Vert \leq \Vert f \Vert.
    \end{eqnarray*}
    This means that $\opPP_{M_\opA(g)} f_0$ is the minimum-norm solution of $\opA f = g$.
\end{proof}
At this point it is to be noted here that, as $\opA$ and $\opA^j$ both map into a finite dimensional space $\Y$, their generalized inverses are bounded operators defined on the whole space $\Y$, see \cite[Satz 2.1.8]{Rieder}.
The next step is to show that $(\opA^j)^+g$ converges to $\opA^+g$. For that purpose we need some results on orthogonal projections:

\begin{lemma}
\label{lem:projections_convergence}
Let $(V_j)_{j\in \mathbb{N}}$ be a sequence of nested subspaces of $\X$. For all $f\in \X$ it holds that
\begin{equation*}
    \lim_{j\to \infty} \opPP_{V_j}f = \opPP_{V}f,
\qquad
\text{where}
\qquad
    V := \overline{\bigcup_{j\in \mathbb{N}} V_j}.
\end{equation*}
In particular, for all $f\in \X$ it holds that
\begin{equation*}\fl 
    \lim_{j\to \infty} \opPP_{\X_j}f = \opPP_{\overline{\bigcup_j \X_j}}f
\qquad
\text{as well as}
\qquad
    \lim_{j\to \infty} \opPP_{N(\opA^j)}f = \opPP_{\overline{\bigcup_j N(\opA^j)}}f.
\end{equation*}
\end{lemma}
\begin{proof}
Let $f\in V$ and $\varepsilon>0$. By definition of $V$, there exists some $N\in \mathbb{N}$ and $f_\varepsilon\in V_N$ such that $\Vert f- f_\varepsilon\Vert<\varepsilon$. Due to the nestedness of the $V_j$ we conclude that $f_\varepsilon$ belongs to all $V_j$ for $n\geq N$.
Thus, 
\begin{eqnarray*}
    \Vert \opPP_{V}f - \opPP_{V_j}f\Vert &\leq \Vert f-f_\varepsilon \Vert + \Vert \opPP_{V_j}f_\varepsilon - \opPP_{V_j} f \Vert \\
    &\leq (1+ \Vert \opPP_{V_j}\Vert )\varepsilon \leq 2\varepsilon.
\end{eqnarray*}
Therefore $\opPP_{V_j}f$ converges to $\opPP_Vf$. Due to the nestedness, the  general case $f\in \X$ follows analogously by rewriting $f=f_V + f_{V^\perp}$ for $f_V \in V$ and $f_{V^\perp}\in V^\perp.$
\end{proof}

\begin{corollary}\label{coro:proj_sol_space_conv_closure}
    Let $(x_n)_{n\in \N}$ be a sequence in $\X$ converging to some $x\in \X$. It holds that
    \begin{equation*}
        \lim_{j\to \infty} \opPP_{M_{\opA^j}(g)}x_j = \opPP_{V}x,
    \qquad    \text{where}\qquad        
    V := \overline{\bigcup_j M_{A^j}(g)}.
    \end{equation*}
\end{corollary}
\begin{proof}
As $g\in \textup{Ran}(\opA^j)$ for all $j\in \N$ and by the nestedness of the $\X_j$, there is some $x'\in \X$ with $x'\in M_{\opA^j}(g)$ for all $j\in \mathbb{N}.$ Therefore, we can write 
\begin{equation*}
    M_{\opA^j}(g) = x' + N(\opA^j).
\end{equation*}
Hence, we conclude by Lemma \ref{lem:projections_convergence} that
\begin{eqnarray*}
    \lim_{j\to\infty} \opPP_{M_{\opA^j}(g)} x &= x' + \lim_{j\to\infty} \opPP_{N\opA^j)} (x'-x)\\
    &= x' + \opPP_{\overline{\bigcup_j N(\opA^j)}} (x'-x)\\
    &=\opPP_{x' + \overline{\bigcup_j N(\opA^j)}} x
    =\opPP_{\overline{x' + \bigcup_j N(\opA^j)}} x
    =\opPP_V x.
\end{eqnarray*}
Therefore, for $\varepsilon>0$ there exists a $J\in \mathbb{N}$ such that
\begin{equation*}
    \Vert \opPP_{M_{\opA^j}(g)} x - \opPP_V x \Vert \leq \frac{\varepsilon}{2} \qquad \forall \; j\geq J
\end{equation*}
As $(x_n)_n$ converges to $x$ there is some $J'\geq J$ such that $\Vert x_j - x\Vert \leq \frac{\varepsilon}{2}$ for all $j\geq J'$. Altogether, for $j\geq J'$ it follows

\begin{eqnarray*}\fl
    \Vert \opPP_V x - \opPP_{M_{\opA^j}(g)} x_j \Vert &= \Vert \opPP_V x - \opPP_{M_{\opA^j}(g)} x_j \Vert \\
    &\leq \Vert \opPP_V x - \opPP_{M_{\opA^j}(g)} x \Vert + \Vert \opPP_{M_{\opA^j}(g)} x - \opPP_{M_{\opA^j}(g)} x_j \Vert \\
    &\leq \varepsilon/2 + \Vert \opPP_{M_{\opA^j}(g)} \Vert \cdot \Vert x - x_j\Vert \leq \varepsilon
\end{eqnarray*}
which ends the proof.
\end{proof}

 Lemma \ref{lem:projections_convergence} enables us to derive another important result, which is a special case of \cite[Lemma 6.1.5.]{Rieder}.
\begin{lemma}\label{lem:projection_to_zero}
For all $f \in N(\opA)^\perp$ it holds that $\opPP_{N(\opA^j)} f \overset{j\to\infty}{\longrightarrow} 0$.
\end{lemma}
\begin{proof}
    Let $\varepsilon>0$ and set $V:= \overline{\bigcup_j \X_j}$. As $\opA^*$ maps $\textup{Ran}(\opA)$ into a dense subset of $N(\opA)^\perp$, there exists an element $y\in \textup{Ran}(\opA)$ such that $\Vert x- \opA^*y\Vert < \varepsilon/2$. It follows from Lemma \ref{lem:projections_convergence} that
    \begin{equation*}
        \opPP_{\X_j} \opA^*y \overset{j\to \infty}{\longrightarrow} \opPP_V \opA^*y = \opA^*y,
    \end{equation*}
    as $\opA^*y\in N(\opA)^\perp\subseteq V$ by assumption (\ref{assum:contain_Nperp}). Therefore, there exists $J\in \mathbb{N}$ such that $\Vert \opPP_{\X_j} \opA^*y - \opA^*y   \Vert \leq \varepsilon/2$ for all $j\geq J$. Using Lemma \ref{lem:adjoint_restriction}, the desired result follows since for all $j \geq J$ we have
    \begin{eqnarray*}\fl
        \Vert \opPP_{N(\opA^j)}f \Vert &\leq \Vert  \opPP_{N(\opA^j)}(f-\opA^*y) \Vert + \Vert \opPP_{N(\opA^j)} (\opA^j)^*y \Vert + \Vert \opPP_{N(\opA^j)}(\opA^* - (\opA^j)^*)y \Vert\\
        &\leq \varepsilon/2 + \Vert \opA^*y - (\opA^j)^*y\Vert = \varepsilon/2 + \Vert \opA^*y - \opPP_{\X_j}\opA^*y\Vert \leq \varepsilon.
    \end{eqnarray*}
\end{proof}
Now we are able to prove the convergence of the sequence of the generalized solutions $(\opA^j)^+ g$. Regarding the proof, we follow the ideas in  \cite[Section 6.1.2.]{Rieder}.
\begin{theorem}\label{thm:generalized_inverses_converge}
    It holds that $((\opA^j)^+ g)_j$ converges to $\opA^+ g$. 
\end{theorem}
\begin{proof}
We set $f^+:= \opA^+ g$ and $f^+_j:= (\opA^j)^+ g$. 
We recall the notation $N(\opA^j)^{\perp_j}$ for the orthogonal complement of $N(\opA^j)$ in $\X^j$. 
We start with the consideration
\begin{eqnarray*}
    f^+ - f^+_j &= f^+ - (\opA^j)^+g\\
    &= (I - (\opA^j)^+\opA) f^+\\
    &= (I - (\opA^j)^+\opA) (f^+- \opPP_{\X_j}f^+ ) - (I - (\opA^j)^+\opA) \opPP_{\X_j}f^+,
\end{eqnarray*}
where $I$ denotes the identity on $\X$.
Regarding the second term we apply \cite[Theorem 2.1.9]{Rieder} to derive
\begin{eqnarray*}
    (I - (\opA^j)^+\opA) \opPP_{\X_j}f^+ &= \opPP_{\X_j}f^+ - (\opA^j)^+\opA \opPP_{\X_j}f^+ \\
    &= \opPP_{\X_j}f^+ - (\opA^j)^+\opA^j \opPP_{\X_j}f^+\\
    &= \opPP_{\X_j}f^+ - \opPP_{N(\opA^j)^{\perp_j}} \opPP_{\X_j}f^+\\
    &=\opPP_{\X_j}f^+ - \opPP_{N(\opA^j)^{\perp_j}} f^+ = \opPP_{N(\opA^j)} f^+,
\end{eqnarray*}
which converges to 0, according to Lemma \ref{lem:projection_to_zero}, as $f^+\in N(\opA)^\perp.$

It remains to show that also the first term above converges to zero.
We start by mentioning that the generalized solutions of $\opA^j$ and $\opA$ are bounded operators. By the surjectivity of the $\opA^j$ we conclude for arbitrary $y\in \Y$ that $(\opA^{j})^+ y$ solves the inverse problem $\opA^j f = y$. It is due to the nestedness (\ref{assum:nestedness}) of the $\X^j$ that it solves also $\opA^{j+1} f = y$. Thus, by definition of the generalized inverse we obtain
\begin{equation*}
    \Vert (\opA^{j+1})^+y \Vert \leq \Vert (\opA^{j})^+y \Vert \textup{ for all } j\in \mathbb{N}.
\end{equation*}
As a consequence, there is a $C>0$ such that 
\begin{equation}\label{eq:thm:generalized_inverse_convergence}
    \Vert (\opA^j)^+\Vert \leq C \textup{ for all } j\in \mathbb{N}.
\end{equation}
Therefore, we are able to estimate
\begin{eqnarray*}
    \Vert (I - (\opA^j)^+\opA) (f^+- \opPP_{\X_j}f^+ ) \Vert &\overset{(\ref{eq:thm:generalized_inverse_convergence})}{\leq}(1+C) \Vert \opA \Vert \cdot \Vert f^+- \opPP_{\X_j}f^+ \Vert.
\end{eqnarray*}
As $f^+\in N(\opA)^\perp \subseteq \overline{\bigcup_j \X_j}$, the right-hand-side of the inequality converges to 0, according to Lemma \ref{lem:projections_convergence}. Finally, we have shown that $(f^+_j)_j$ indeed converges to $f^+.$
\end{proof}
\begin{remark}
If we had not assumed the surjectivity of $\opA^j$, then the estimate $\Vert (\opA^{j+1})^+ y\Vert\leq \Vert (\opA^j)^+ y\Vert$ in the proof of the previous theorem might not hold in general.
\end{remark}
\begin{corollary}\label{coro:not_so_bad_condition}
    For any $f \in N(\opA)^\perp$ it holds that 
    \begin{equation*}
        \opPP_V f = \opPP_{M_{\opA}(g)} f,
    \qquad \text{where} \qquad
        V:= \overline{\bigcup_{j\in \mathbb{N}} M_{\opA^j}(g)}.
    \end{equation*}
\end{corollary}
\begin{proof}
According to Lemma \ref{lem:generalized_inverse} we have that $\opPP_{M_{\opA}(g)} f = \opA^+ g$.
Since $N(\opA)^\perp \subset N(\opA^j)^\perp$, it follows that $\opPP_{M_{\opA^j}(g)} f = (\opA^j)^+ g$.
Therefore, applying Corollary \ref{coro:proj_sol_space_conv_closure} and Lemma \ref{lem:generalized_inverse} and Theorem \ref{thm:generalized_inverses_converge} yields
\begin{eqnarray*}
    \opPP_V f = \lim_{j\to\infty} \opPP_{M_{\opA^j}(g)} f
    &=
    \lim_{j\to\infty}
    (\opA^j)^+ g = \opA^+ g = \opPP_{M_\opA(g)} f.
\end{eqnarray*}
\end{proof}

We recall our assumption that $\opA f = g$ has a solution in $B_\rho(0)$ and $\opA^j f = g$ has a solution in $B_{\rho_j}(0)\cap \X_j$. Note that due to the nestedness of the $\X_j$ we may assume that $\rho_{j+1} \leq \rho_j$ for all $j$. In all what follows, we consider for the application of RESESOP to $\opA^{\eta, j}$ and for the application of SESOP to $\opA$ a uniform
\begin{equation}
    \rho' := \max_j \{\rho, \rho_j\} = \max \{ \rho, \rho_1\}.
\end{equation}
This guarantees that the restricted solution sets $M_{\opA^j}(g) \cap B_{\rho'}(0)$ are non-empty for all $j$ and are contained in the respective stripes $H_n^{\eta, \delta, j}$ from Algorithm \ref{algo:resesop}.

In order to prove Theorem \ref{thm:full_semi_regularization}, we follow the same strategy as in \cite{Blanke_20} for proving Theorem \ref{thm:RESESOP_regularization} for the case $\X^j = \X$.
For that, we first show that the iterates $f_n^{\eta, \delta, j}$ from the application of RESESOP to $\opA^{\eta, j}$ converges to $f_n$, the iterates from the application of SESOP to $\opA$. To simplify the notation, we denote by $x^{\eta, \delta, j} \to x$ the following
$$\lim_{(\eta,\delta ,j)\to(0,0,\infty)} x^{\eta, \delta, j} = x$$ 
and state a useful result.
\begin{lemma}\label{lem:induction_helper}
    Let $n\in \mathbb{N}$ and consider a sequence $(\eta, \delta, j)$ converging to $(0,0,\infty)$. If $f_n^{\eta, \delta, j}$ converges to $f_n$, then
    \begin{equation*}
         w_n^{\eta, \delta, j} \to w_n, \, \,   u_n^{\eta, \delta, j} \to u_n, \, \, \alpha_n^{\eta, \delta, j}\to \alpha_n, \, \, \xi_n^{\eta, \delta, j} \to \xi_n.
    \end{equation*}
\end{lemma}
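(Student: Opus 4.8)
The plan is to prove each of the four convergences in turn, using the hypothesis $f_n^{\eta,\delta,j}\to f_n$ together with the operator-norm bounds $\Vert \opA_j^\eta - \opA_j\Vert_{op}\le\eta$ and $\Vert \opA_j\Vert_{op}\le\Vert\opA\Vert_{op}$ (the restrictions do not increase the norm), the data bound $\Vert g^\delta - g\Vert\le\delta$, and Lemma~\ref{lem:adjoint_restriction} for the adjoints. Implicitly one takes a sequence $(\eta,\delta,j)_l\to(0,0,\infty)$ and shows convergence along it; I will suppress the index $l$.

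First I would handle $w_n^{\eta,\delta,j}\to w_n$. Write
\begin{equation*}
w_n^{\eta,\delta,j} - w_n = \big(\opA_j^\eta f_n^{\eta,\delta,j} - g^\delta\big) - \big(\opA f_n - g\big)
= \opA_j^\eta\big(f_n^{\eta,\delta,j}-f_n\big) + \big(\opA_j^\eta - \opA\big)f_n + (g - g^\delta),
\end{equation*}
where $\opA_j f_n = \opA f_n$ since $f_n\in\X_j$ for $j$ large enough (here one uses $\X=\overline{\cup\X_j}$ to place $f_n$, or more carefully one notes $\opA_j^\eta$ acts on $f_n$ only after we know $f_n\in\X_j$; alternatively interpret the difference via the ambient operators). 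The first term is bounded by $\Vert\opA_j^\eta\Vert_{op}\Vert f_n^{\eta,\delta,j}-f_n\Vert\le(\Vert\opA\Vert_{op}+\eta)\Vert f_n^{\eta,\delta,j}-f_n\Vert\to0$, the second by $\eta\Vert f_n\Vert\to0$, and the third by $\delta\to0$. Hence $w_n^{\eta,\delta,j}\to w_n$.

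Next, for $u_n^{\eta,\delta,j}\to u_n$: by Lemma~\ref{lem:adjoint_restriction}, $u_n^{\eta,\delta,j}=(\opA_j^\eta)^*w_n^{\eta,\delta,j}=\opPP_{\X_j}(\opA^\eta)^*w_n^{\eta,\delta,j}$ and $u_n=\opA^*w_n$. Split
\begin{equation*}
u_n^{\eta,\delta,j}-u_n = \opPP_{\X_j}(\opA^\eta)^*\big(w_n^{\eta,\delta,j}-w_n\big) + \opPP_{\X_j}\big((\opA^\eta)^*-\opA^*\big)w_n + \big(\opPP_{\X_j}\opA^*w_n - \opA^*w_n\big).
\end{equation*}
The first term is bounded by $(\Vert\opA\Vert_{op}+\eta)\Vert w_n^{\eta,\delta,j}-w_n\Vert\to0$ by the previous step, the second by $\eta\Vert w_n\Vert\to0$, and the third tends to $0$ because $\opPP_{\X_j}x\to x$ for every fixed $x\in\X$ as $j\to\infty$, by the nesting $\X_j\subseteq\X_{j+1}$ and $\X=\overline{\cup\X_j}$ (this is the one place the subspace structure, rather than just a norm estimate, is essential). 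Then $\alpha_n^{\eta,\delta,j}=\langle w_n^{\eta,\delta,j},g^\delta\rangle\to\langle w_n,g\rangle=\alpha_n$ follows from $w_n^{\eta,\delta,j}\to w_n$, $g^\delta\to g$, and continuity of the inner product; and $\xi_n^{\eta,\delta,j}=\Vert w_n^{\eta,\delta,j}\Vert(\eta\rho+\delta)\to\Vert w_n\Vert\cdot0=\xi_n=0$ since $\Vert w_n^{\eta,\delta,j}\Vert$ is bounded (convergent) and $\eta\rho+\delta\to0$.

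The only real subtlety — and the main thing to be careful about rather than a deep obstacle — is the term $\opPP_{\X_j}\opA^*w_n\to\opA^*w_n$: one must invoke that $(\X_j)$ is an increasing chain of closed subspaces with dense union, so the orthogonal projections converge strongly to the identity on $\X$ (or, with the corrected assumption $\overline{\cup\X_j}=N(\opA)^\perp$, to the projection onto $N(\opA)^\perp$, which suffices since $\opA^*w_n\in R(\opA^*)\subseteq N(\opA)^\perp$). A secondary point worth a sentence is the bookkeeping that $f_n$, $w_n$, $u_n$ are the ambient SESOP quantities while $f_n^{\eta,\delta,j}$ lives in $\X_j$; the estimates above are written in $\X$ and $\Y$, so no inconsistency arises, but one should state once that $\opA_j v=\opA v$ and $(\opA_j)^*=\opPP_{\X_j}\opA^*$ for $v\in\X_j$ to justify rewriting everything through the ambient operators.
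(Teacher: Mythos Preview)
Your proof is correct and follows essentially the same route as the paper: the same three-term splitting for $w_n$ (data error, action on the difference of iterates, operator error on $f_n$), the same use of Lemma~\ref{lem:adjoint_restriction} to rewrite $(\opA_j^\eta)^*$ as $\opPP_{\X_j}(\opA^\eta)^*$, and the same identification of $\opPP_{\X_j}\opA^*w_n\to\opA^*w_n$ as the key point (with the observation that $\opA^*w_n\in N(\opA)^\perp$ handling the corrected assumption $\overline{\cup\X_j}=N(\opA)^\perp$). The only wrinkle is the one you already flag: writing $\opA_j^\eta f_n$ is not literally licit since the ambient SESOP iterate $f_n$ need not lie in $\X_j$; the paper avoids this by immediately passing to the ambient $\opA^\eta$ via $\opA_j^\eta f_n^{\eta,\delta,j}=\opA^\eta f_n^{\eta,\delta,j}$, which is exactly the ``interpret via ambient operators'' fix you mention.
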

\begin{proof}
First, note that for $x\in \X_j$ it holds that $\opA^jx = \opA x$ and $\opA^{\eta,j}x = \opA^\eta x$.
Consider
\begin{eqnarray*}
    \Vert w_n^{\eta, \delta, j} - w_n \Vert &= \Vert \opA^{\eta, j} f_n^{\eta, \delta, j} - g^\delta - (\opA f_n - g)\Vert\\
    &\leq \delta + \Vert \opA^\eta f_n^{\eta, \delta, j} - \opA f_n \Vert \\
    &\leq \delta + \Vert \opA^\eta f_n^{\eta, \delta, j} - \opA^\eta f_n \Vert + \Vert \opA^\eta f_n - \opA f_n \Vert \\ 
    &\leq \delta + \Vert \opA^\eta \Vert \cdot \Vert f_n^{\eta, \delta, j} - f_n \Vert + \eta \Vert f_n \Vert.
\end{eqnarray*}
Note that $\Vert \opA^\eta \Vert$ is bounded, as $\opA^\eta$ converges to $\opA$. Moreover, $f_n^{\eta, \delta, j}$ converges to $f_n$ by assumption, so that we conclude from the estimation above that $w_n^{\eta, \delta, j}$ converges to $w_n$ if $(\eta, \delta, j)$ converges to $(0,0,\infty)$.\\
Using Lemma \ref{lem:adjoint_restriction}, we obtain
\begin{eqnarray*}\fl 
    \Vert u_n^{\eta, \delta, j} - u_n \Vert &= \Vert (\opA^{\eta, j})^* w_n^{\eta, \delta, j} - \opA^*w_n \Vert \\
    &= \Vert \opPP_{\X_j}(\opA^\eta)^* w_n^{\eta, \delta, j} - \opA^*w_n \Vert \\
    &\leq \Vert \opPP_{\X_j}(\opA^\eta)^* w_n^{\eta, \delta, j} - \opPP_{\X_j}(\opA^\eta)^* w_n \Vert  
    + \Vert \opPP_{\X_j}(\opA^\eta)^* w_n - \opA^*w_n \Vert \\
    &\leq \Vert (\opA^\eta)^*\Vert \cdot \Vert w_n^{\eta, \delta, j}  - w_n \Vert 
    + \Vert \opPP_{\X_j}(\opA^\eta)^* w_n - \opA^*w_n \Vert.
\end{eqnarray*}
Therefore, it follows that
\begin{eqnarray*}\fl
    \Vert \opPP_{\X_j}(\opA^\eta)^* w_n - \opA^*w_n \Vert &\leq \Vert \opPP_{\X_j}(\opA^\eta)^* w_n - \opPP_{\X_j}\opA^* w_n \Vert
    + \Vert \opPP_{\X_j}\opA^* w_n - \opA^*w_n \Vert \\
    &\leq \Vert (\opA^\eta)^* - \opA^* \Vert\cdot \Vert w_n\Vert + \Vert \opPP_{\X_j}\opA^* w_n - \opA^*w_n \Vert\\
    &\leq \eta \Vert w_n \Vert + \Vert \opPP_{\X_j}\opA^* w_n - \opA^*w_n \Vert.
\end{eqnarray*}
Since $\opA^* w_n\in N(\opA)^\perp \subseteq \overline{\bigcup_j \X_j}$, the second term on the right-hand side converges to 0 by Lemma \ref{lem:projections_convergence}.
Therefore, it follows by the estimations above and the convergence of the $w_n^{\eta, \delta, j}$ that $u_n^{\eta, \delta, j}$ converges to $u_n$. It is now straightforward to see that also $\alpha_n^{\eta, \delta, j}$, $\xi_n^{\eta, \delta, j}$ converge to $\alpha_n, \xi_n$, respectively.
\end{proof}

With this result we are able to prove the following.
\begin{corollary}{\cite[Lemma 3.8.]{Blanke_20}}\label{coro:fully_semi_iterates_converge}
    Assume that the RESESOP start iterates $f_0^{\eta, \delta, j}\in B_{\rho'}(0)\cap \X_j$ converge to the SESOP start iterate $f_0\in N(\opA)^\perp\cap B_{\rho'}(0)$. Then it holds that
    $$f_n^{\eta, \delta, j} \to f_n \qquad \forall \ n\in \N.$$
\end{corollary}
\begin{proof} We prove this statement by induction. The base case for $n=0$ is fulfilled just by assumption. Assume that $f_n^{\eta, \delta, j}$ converges to $f_n$. From Algorithm \ref{algo:resesop} and Proposition \ref{prop:descent_property}, we observe that

\begin{eqnarray*}\fl
 f_{n+1}^{\eta, \delta, j}  = \Bigg\{ \begin{array}{ll}
         f_n^{\eta, \delta, j} & \mbox{if $\Vert w_n^{\eta, \delta, j}\Vert \leq \tau(\rho'\eta+\delta)$};\\
        f_n^{\eta, \delta, j} - \frac{\langle u_n^{\eta, \delta, j}, f_n^{\eta, \delta, j}\rangle - (\alpha_n^{\eta, \delta, j} + \xi_n^{\eta, \delta, j})}{\Vert u_n^{\eta, \delta, j}\Vert^2} u_n^{\eta, \delta, j} & \mbox{otherwise}.\end{array} 
\end{eqnarray*}

First, consider sequences in 
\begin{equation*}
    I_1 := \{ (\eta, \delta, j): \Vert w_n^{\eta, \delta, j} \Vert \leq  \tau(\rho'\eta + \delta) \}.
\end{equation*}
This means, that for those $(\eta, \delta, j)$ the discrepancy principle (\ref{eq:discrepancy_principle}) is fulfilled at iteration index $n$, which means $f_{n+1}^{\eta, \delta, j} = f_n^{\eta, \delta, j}$.
In this case, we conclude by the induction hypothesis and the previous Lemma \ref{lem:induction_helper}
\begin{equation*}
    \Vert w_n \Vert = \lim \Vert w_n^{\eta, \delta, j} \Vert = 0,
\end{equation*}
which implies $ \opA f_n - g = w_n = 0$ and hence $f_{n+1} = f_n = \lim f_n^{\eta, \delta, j} = \lim f_{n+1}^{\eta, \delta, j}.$

Second, consider sequences in 
\begin{equation*}
    I_2 := \{ (\eta, \delta, j): \Vert w_n^{\eta, \delta, j} \Vert > \tau(\rho'\eta + \delta) \}.
\end{equation*}
If $w_n\neq 0$, it follows by the feasibility of $f_0$ that $u_n\neq 0$. Therefore, we conclude by the induction hypothesis, Lemma \ref{lem:induction_helper} and Proposition \ref{prop:descent_property} a) that \begin{eqnarray*}
    f_{n+1}^{\eta, \delta, j} &= f_n^{\eta, \delta, j} - \frac{\langle u_n^{\eta, \delta, j}, f_n^{\eta, \delta, j}\rangle - (\alpha_n^{\eta, \delta, j} + \xi_n^{\eta, \delta, j})}{\Vert u_n^{\eta, \delta, j}\Vert^2} u_n^{\eta, \delta, j}\\
    &\to f_n - \frac{\langle u_n, f_n\rangle - (\alpha_n + \xi_n)}{\Vert u_n\Vert^2} u_n = f_{n+1}
\end{eqnarray*}
Let now $w_n = 0$, which means that $f_n$ is the outcome of the SESOP algorithm. Due to Theorem \ref{thm:sesop_convergence}, we conclude $f_{n+1} = f_n = \opPP_{M_\opA(g)} f_0$.
Let $\varepsilon >0$.
We insert $z:= \opPP_{M_{\opA^j}(g)} f_0^j$ into Proposition \ref{prop:descent_property} b) and obtain
\begin{eqnarray*}\fl
     \left\Vert\frac{\langle u_n^{\eta, \delta, j}, f_n^{\eta, \delta, j}\rangle - (\alpha_n^{\eta, \delta, j} + \xi_n^{\eta, \delta, j})}{\Vert u_n^{\eta, \delta, j}\Vert^2} u_n^{\eta, \delta, j} \right\Vert &=  \left( \frac{\Vert w_n^{\eta, \delta, j} \Vert \big(\Vert w_n^{\eta, \delta, j} \Vert - (\rho'\eta + \delta) \big) }{\Vert u_n^{\eta, \delta, j} \Vert}\right)^2\\
     &\leq \Vert \opPP_{M_{\opA^j}(g)} f_0^j  - f_n^{\eta, \delta, j} \Vert.  \\
     &\leq \Vert \opPP_{M_{\opA^j}(g)} f_0^j  - \opPP_{M_\opA(g)} f_0 \Vert + \Vert \opPP_{M_\opA(g)} f_0  - f_n^{\eta, \delta, j} \Vert.
\end{eqnarray*}
The first term on the right-hand side converges to zero according to Corollary \ref{coro:not_so_bad_condition} and Corollary \ref{coro:proj_sol_space_conv_closure}, whereas the second term converges to zero by the induction hypothesis.
Finally, this means that
\begin{eqnarray*}
    f_{n+1}^{\eta, \delta, j} &= f_n^{\eta, \delta, j} - \frac{\langle u_n^{\eta, \delta, j}, f_n^{\eta, \delta, j}\rangle - (\alpha_n^{\eta, \delta, j} + \xi_n^{\eta, \delta, j})}{\Vert u_n^{\eta, \delta, j}\Vert^2} u_n^{\eta, \delta, j} 
    \to f_n = f_{n+1}.
\end{eqnarray*}
Altogether we conclude that $f_n^{\eta, \delta, j}$ converges to $f_{n+1}$.
\end{proof}

Now we are able to prove the main result of this section.

\begin{proof}[Proof of Theorem \ref{thm:full_semi_regularization}]
\, 

First, each subsequence of $n_*(l)$ has a bounded or a monotonically increasing, unbounded subsequence. Therefore, it suffices to consider these two cases - namely $n_*(l)$ to be bounded or unbounded but monotonically increasing - and show that in both cases $f_{n_*(l)}^{\eta, \delta, j}$ converges to the same element $\opPP_{M_\opA(g)} f_0$.
\begin{itemize}
\item[\textbf{(i)}]
Assume $n_*(l)$ to be bounded and set
\begin{equation*}
    N := \max_l n_*(l) \in \mathbb{N}.
\end{equation*}
We observe by the definition of the finite stopping index that
\begin{equation*}
    f_n^{(\eta, \delta, j)_l} = f_N^{(\eta, \delta, j)_l}, \textup{ for all } n\geq N.
\end{equation*}
As a consequence, for those $n\geq N$ the discrepancy principle is satisfied which yields
\begin{equation*}
    \Vert \opA^{\eta_l, j_l} f_n^{(\eta, \delta, j)_l} - g^{\delta_l} \Vert = \Vert \opA^{\eta_l} f_n^{(\eta, \delta, j)_l} - g^{\delta_l} \Vert \leq \tau (\rho' \eta_l + \delta_l) \overset{l\to \infty}{\longrightarrow} 0.
\end{equation*}
On the other hand, we have
\begin{equation*}
    \Vert \opA^{\eta_l,j_l} f_n^{(\eta, \delta, j)_l} - g^{\delta_l} \Vert = \Vert w_n^{(\eta, \delta, j)_l} \Vert  \overset{l\to \infty}{\longrightarrow} \Vert w_n \Vert,
\end{equation*}
according to Corollary \ref{coro:fully_semi_iterates_converge} and Lemma   \ref{lem:induction_helper}. This implies $w_n=0$, so $\opA f_n = g$, which means that $f_n$ is the output of the SESOP algorithm. By Theorem \ref{thm:sesop_convergence} we conclude
\begin{equation*}
    f_n = \opPP_{M_\opA(g)} f_0 \textup{ for all } n\geq N, 
\end{equation*}
which together with Corollary \ref{coro:fully_semi_iterates_converge} leads to 
\begin{equation*}
    f_n^{(\eta, \delta, j)_l} = f_N^{(\eta, \delta, j)_l} \overset{l\to \infty}{\longrightarrow} f_N = \opPP_{M_\opA(g)} f_0.
\end{equation*}

\item[\textbf{(ii)}]
Now assume that $n_*(l)$ is monotonically increasing and unbounded.

On the one hand, the SESOP iterates $(f_n)_n$ converge to $\opPP_{M_\opA(g)}f_0$. On the other hand, by Corollary \ref{coro:not_so_bad_condition} and Corollary \ref{coro:proj_sol_space_conv_closure} we have that
\begin{equation*}
    \opPP_{M_{\opA^j}(g)}f^j_0 \overset{j\to\infty}{\longrightarrow} \opPP_{M_\opA(g)}f_0.
\end{equation*}
Therefore, let choose an $N\in \mathbb{N}$ such that
\begin{eqnarray}
    \Vert f_n - \opPP_{M_\opA(g)}f_0 \Vert < \frac{\varepsilon}{4},\label{eq:thm_regu_1}\\
    \Vert \opPP_{M_{\opA^j}(g)}f^j_0 - \opPP_{M_\opA(g)}f_0 \Vert < \frac{\varepsilon}{4},\label{eq:thm_regu_2}
\end{eqnarray}
for all $n,j \geq N.$ 
According to Corollary \ref{coro:fully_semi_iterates_converge} there exists some $l'\in \mathbb{N}$ with $l'\geq N$ such that 
\begin{equation}
    \Vert f_{N}^{(\eta, \delta, j)_l} - f_{N} \Vert < \frac{\varepsilon}{4},\label{eq:thm_regu_3}
\end{equation}
for all $l\geq l'$. As $n_*(l)$ is unbounded and monotonically increasing we can choose $l''\geq l'$ such that $n_*(l) \geq N$ for all $l\geq l''$. 
For those $l$ we derive
\begin{eqnarray*}\fl
    \Vert f_{n_*(l)}^{(\eta, \delta, j)_l} - \opPP_{M_\opA(g)}f_0\Vert &\leq \Vert f_{n_*(l)}^{(\eta, \delta, j)_l} - \opPP_{M_{\opA^N}(g)}f^N_0  \Vert
    + \Vert \opPP_{M_{\opA^N}(g)}f^N_0 - \opPP_{M_\opA(g)}f_0\Vert\\
    &\overset{(\ref{eq:thm_regu_2})}{<} \frac{\varepsilon}{4} + \Vert f_{n_*(l)}^{(\eta, \delta, j)_l} - \opPP_{M_{\opA^N}(g)}f^N_0 \Vert\\
    &\leq \frac{\varepsilon}{4} + \Vert f_N^{(\eta, \delta, j)_l} - \opPP_{M_{\opA^N}(g)}f^N_0 \Vert,
\end{eqnarray*}
where the last step follows from Proposition \ref{prop:descent_property} b), which is applicable as $\opPP_{M_{\opA^N}(g)}f^N_0$ belongs to the restricted solution set $M^{\rho'}_{\opA^N}(g)$, which is a subset of $M^{\rho'}_{\opA^{n_*(l)}}(g)$.
We further estimate
\begin{eqnarray*}\fl
    \Vert f_N^{(\eta, \delta, j)_l} - \opPP_{M_{\opA^N}(g)}f^N_0 \Vert 
    &\leq \Vert f_N^{(\eta, \delta, j)_l} - f_N \Vert 
    + \Vert f_N - \opPP_{M_{\opA^N}(g)}f^N_0 \Vert \\
    &\overset{(\ref{eq:thm_regu_3})}{<} \frac{\varepsilon}{4} + \Vert f_N - \opPP_{M_{\opA^N}(g)}f^N_0 \Vert\\
    &<  \frac{\varepsilon}{4} + \Vert f_N - \opPP_{M_\opA(g)}f_0 \Vert + \Vert \opPP_{M_\opA(g)}f_0 - \opPP_{M_{\opA^N}(g)}f^N_0 \Vert \\
    &< \frac{3}{4}\varepsilon,
\end{eqnarray*}
due to (\ref{eq:thm_regu_1}) and (\ref{eq:thm_regu_2}).
Altogether, we have shown that for all $l\geq l''$
\begin{equation*}
    \Vert f_{n_*(l)}^{(\eta, \delta, j)_l} - \opPP_{M_\opA(g)}f_0\Vert < \varepsilon.
\end{equation*}

Finally, we have shown convergence of $f_{n_*(l)}^{(\eta, \delta, j)_l}$ to $\opPP_{M_\opA(g)}f_0$ in both cases and hence, by our introductory discussion, in all cases.
\end{itemize}
\end{proof}

We have shown in the last theorem that the RESESOP iterates for $\opA^{\eta,j}$ converge to the SESOP outcome for $\opA$, namely $\opPP_{M_\opA(g)}f_0$. By the choice of $f_0\in N(\opA)^\perp \cap B_{\rho'}(0)$ we have seen in Lemma \ref{lem:generalized_inverse} that $\opPP_{M_\opA(g)}f_0$ must be the minimum-norm solution of $\opA f = g$. As we assumed that $f$ has a solution in $B_\rho(0)$ we conclude that $\opPP_{M_\opA(g)}f_0$ must also be in $B_\rho(0)$ and not only in $B_{\rho'}(0)$ for $\rho'= \max \{ \rho, \rho_j\}$.

\subsection{Application to CST}\label{sec:RESESOP_application}
In this section we specify how the previous framework can be applied to the semi-discrete and fully discrete operators of CST.
We assume that the chosen subspace $\X_j$ of $H^\alpha_0(\Omega)$ ensures surjectivity of the fully discrete operator $(\opL_1^\mu)_j: \X_j\to \R^{P\times K}$ from Section 2.3. We then set 
\begin{eqnarray*}
    \opA^j := (\opL_1^\mu)_j \textup{ \, and \, } \opA^{\eta, j} :=(\opL_1^{\mu^*})_j,
\end{eqnarray*}
as well as
\begin{eqnarray*}
    \opA := \opL_1^\mu \textup{ \, and \, } \opA^\eta :=\opL_1^{\mu^*},
\end{eqnarray*}
for a chosen prior $\mu^*$ to the groundtruth $\mu$. 
If we equip the domain spaces with the Sobolev norm, all operators above are continuous so that the theory in both Sections \ref{sec:RESESOP_restricted} and \ref{sec:RESESOP_restricted_stability} is applicable. That means that applying RESESOP to $(\opL_1^{\mu^*})_j$ does not only regularize $\opL_1^\mu f = g$, see Corollary \ref{coro:regularization_fixed_j}, but is also stable with respect to the chosen subspace in the sense of Theorem \ref{thm:full_semi_regularization}.
However, as mentioned before, we do not have access to the adjoint operators regarding the Sobolev norm, so that for our numerical experiments we need to equip $\X_j$ with the $L_2$-norm. In this case, it still holds that RESESOP applied to $(\opL_1^{\mu^*})_j$ converges to a solution of $\opL_1^\mu f = g$ as soon as the model uncertainty $\eta$ and the noise-level $\delta$ go to zero.
However, this reconstruction may not be stable with respect to the chosen $\X_j$, as the semi-discrete forward operator $\opL_1^\mu$ is no longer continuous.\\

\par Since the model uncertainty highly depends on the respective source, detector positions and energies of incoming photons, we split the operators up by
\begin{equation*}
    \opA^{\eta, j}_{p,k} f := \big((\opL_1^{\mu^*})_j f\big)_{p,k} \in \R, \textup{ for } p\in \{1,...,P\} \textup{ and } k\in \{1,...,K\}
\end{equation*}
and analogously for the other operators. Therefore, in our simulations in Section \ref{sec:simu}, we apply the RESESOP-Kaczmarz Algorithm \ref{algo:resesop} to $\opA^{\eta, j}_{p,k}$.

\section{A Deep Image Prior approach for CST}\label{sec:DIP}

Solving the inverse problem in Compton scattering tomography using standard learning techniques would require large databases obtained from energy-resolved detectors with sufficient energy resolution and $\gamma$-ray sources. Unfortunately, such datasets do not exist preventing the training of a neural network for the CST problem. Alternatively, it is possible to use unsupervised techniques such as Deep Image Prior (DIP), see \cite{Lempitsky2018}.
Therefore, in this section we inspect how the DIP approach can be applied and adapted to the model inexactness in CST.

In this section, we use the same notation as in section \ref{sec:RESESOP_application}. For the sake of readibility, we denote the fully discrete inexact forward operator $(\opL_1^{\mu^*})_j(\cdot)_{p,k}$ by $\opA_{p,k}^{\eta, j}$ and the exact operator $(\opL_1^{\mu})_j(\cdot)_{p,k}$ by $\opA_{p,k}^{j}$.

In order to apply a DIP approach to these operators, we consider a suitable (neural network) mapping $\varphi_\theta: \Z \to \X_j$, where $\theta$ belongs to some parameter space $\Theta$.
Given a single data point $\mathbf{g}^\delta\in \R^{P\times K}$ and some random input $z\in \Z$, the DIP approach seeks for finding parameters $\theta_{opt}\in \Theta$ that minimizes some loss function $\ell$, i.e.
\begin{equation*}
    \theta_{opt} \in \underset{\theta\in \Theta}{\textup{argmin }} \ell\Big(\big(\opA^{\eta,j}_{p,k} \varphi_\theta(z), \mathbf{g}^\delta_{p,k}\big)_{p,k}\Big).
\end{equation*}
The DIP reconstruction is then obtained by evaluating $\varphi_{\theta_{opt}}(z)$.\\

\par Usually, the construction and efficiency of such an approach requires:
\begin{enumerate}
    \item a suitable network architecture must be chosen, which should capture information about the "nature" of images we are looking for, see also \cite{Dittmer_DIP2019};
    \item a stopping criterion in order to avoid noise over-fitting, and
    \item a proper loss function $\ell$, which, as we will see, should also contain information about the model uncertainty between $\opA^{\eta, j}_{p,k}$ and $\opA^j_{p,k}$. In our experiments we focus on inspecting the effect of including model uncertainty estimates to the loss function.
\end{enumerate}

\subsection{Network architecture}
Motivated by the similarities between the model of the first-order scattered and the standard Radon transform, we consider the U-Net provided by J. Leuschner on GitHub\footnote{https://github.com/jleuschn/dival/tree/master/dival/reconstructors/networks}, which was also successfully used in \cite{Baguer_DIP_CT} for CT reconstructions. In our simulation settings, see section \ref{sec:simu}, we obtained the best results for seven layers with $(32,32,64,64,128,128,256)$ channels. For this, we needed to slightly adapt the code, as it was designed for at most six layers. 
Moreover, we used six skip connections and a sigmoid function as a final step of the neural network. 
For the details of the down and up sampling parts of the U-Net we refer directly to the code mentioned above. However, it is reasonable to think that a more optimal network architecture for CST could be constructed in the future, in particular to address the complexity of the model and the multiple-order scattering.

\subsection{Loss functions}
As a first and standard approach - in case of an exact forward operator, see also \cite{Dittmer_DIP2019} - we consider the mean squared error loss function
\begin{equation*}
    \ell_1(\theta) := \frac{1}{PK}\sum_{p,k} \Vert \opA^{\eta,j}_{p,k} \varphi_\theta(z) - \mathbf{g}^\delta_{p,k}\Vert^2,
\end{equation*}
for $\theta\in \Theta$. 
This means that no information on the model uncertainty is explicitly included. By this, we want to inspect, whether the network itself is capable of reducing artifacts caused by considering the inexact forward operators $\opA^{\eta,j}_{p,k}$. As we will see in the next section, this is not the case.

Hence, we want to include information on the model uncertainty to the loss function.
Motivated by the approach in \cite{Blanke_20} to include the model uncertainty to the RESESOP-Kaczmarz procedure, we propose to include the model uncertainty to a loss function via
\begin{equation}\label{def:DIP_loss_eta}
        \ell_2(\theta) := \frac{1}{PK} \sum_{p,k} \Big \vert \Vert \opA^{\eta,j}_{p,k} \varphi_\theta(z) - g^\delta_{p,k} \Vert^2 - c_{p,k}^2 \Big \vert^2,
\end{equation}
where $c_{p,k} := \tau(\rho'\eta_{p,k}+\delta_{p,k})$ is a model correction term inspired from the RESESOP method studied in the previous section. The connection to RESESOP-Kaczmarz is revealed by the following observation: If we assume that one of the summands in (\ref{def:DIP_loss_eta}) is zero, then it is not difficult to see that $\varphi_\theta(z)$ belongs to the boundary of the restricted stripe $B_{\rho'}(0) \cap H(u_{p,k}^{\eta, \delta, j}, \alpha_{p,k}^{\eta, \delta, j}, \xi_{p,k}^{\eta, \delta, j})$, where
\begin{eqnarray*}
u_{p,k}^{\eta, \delta, j} := (\opA^{\eta,j}_{p,k})^*(\opA^{\eta,j}_{p,k}\varphi_\theta(z) - \mathbf{g}^\delta_{p,k}),\\
\alpha_{p,k} := \langle \opA^{\eta,j}_{p,k} \varphi_\theta(z) - \mathbf{g}^\delta_{p,k}, \mathbf{g}^\delta_{p,k} \rangle,\\
\xi^{\eta, \delta, j}_{p,k} := c_{p,k}\Vert \opA^{\eta,j}_{p,k} \varphi_\theta(z) - \mathbf{g}^\delta_{p,k}\Vert 
\end{eqnarray*}
are analogously defined as in Algorithm \ref{algo:resesop}. Hence, if $\theta_{opt}$ is a minimizer of $\ell_2$, then $\varphi_{\theta_{opt}}(z)$ is expected to be close to the boundary of all those stripes. As illustrated in Figure \ref{fig:hyperplane_to_stripe}, the solution of $\opA^j f = \mathbf{g}$ is expected to be close to the stripe boundaries. Further note that $\ell_2$ is also differentiable with respect to $\theta$, given that $\varphi_\theta$ is differentiable, which enables backpropagation.

\begin{remark}
It would also be an option to include $c_{p,k}$ from (\ref{def:DIP_loss_eta}) to the parameter space $\Theta$. However, it is then important to restrict the $c_{p,k}$ to an interval determined by an estimation of the model uncertainty.
This could be achieved in the following way:
Include $\theta^{(c)} \in \mathbb{R}^{P\times K}$ to the parameter space $\Theta$ and choose $c_{p,k}(\theta^{(c)}_{p,k})$ as a differentiable function of $\theta^{(c)}_{p,k}$, whose range is contained in the desired interval. This way, the network could learn a better estimation of the model uncertainty and be less vulnerable to bad model uncertainty estimates. This idea might be valuable for further research.
Moreover, the advantage of considering loss functions like $\ell_2$ is that they probably do not require a stopping criterion.
\end{remark}

We end this section by describing the general training process. For minimizing the loss functions we used the stochastic optimizer \emph{ADAM} from pytorch\footnote[5]{\url{https://pytorch.org/docs/stable/generated/torch.optim.Adam.html}}.
We observed that in the beginning of the training process $\ell_2$ seems to be more sensitive than $\ell_1$ to the choice of the learning rate in the \emph{ADAM} optimizer. More precisely, if the learning rate does not decrease quickly enough during the training, it sometimes happened that the current reconstruction completely changes from one training step to another. 
This might be explained by inspecting the gradients of the loss functions with respect to $\theta$: For simplicity, we consider instead the following functions and their gradients
\begin{eqnarray*}
    f_1(x) := \sum_{k=1}^N \vert x_k\vert^2 \quad &\text{with} \quad  (\nabla f_1(x))_k = 2x_k,\\
    f_2(x) := \sum_{k=1}^N \vert x_k^2 -c_k^2\vert^2 \quad &\text{with} \quad (\nabla f_2(x))_k = 2x_k \cdot 2 ( x_k^2- c_k^2).
\end{eqnarray*}
Thus, if $x$ is not close to $c$ - which is the case in the beginning of the optimization - the gradients of $f_2$ have a larger dynamic than those of $f_1$. So, if the learning rate is not small enough, the gradient descent step for $\ell_2$ is more likely to be too large. 
In order to stabilize the minimization of $\ell_2$ the following strategies turned out to be efficient:
First, starting with $\ell_1$ and later changing the loss function to $\ell_2$ is more robust to the choice of the learning rate.
Second, clipping the gradients during the backpropagation turned out to be another good option to stabilize the loss function $\ell_2$, i.e. rescaling the current gradient as soon as its norm is larger than a chosen threshold. This threshold can iteratively be reduced during the training process. 
In our simulations we combined both approaches.

\section{Simulation results}\label{sec:simu}

In this section, we consider only the two-dimensional case for CST for convenience as the three-dimensional case is significantly more expensive in terms of computations, as mentioned in Section \ref{sec:forward}. However, there is no obstacle in the analysis of both forward models and reconstruction techniques for a direct application to 3D.\\ 

\par We start by presenting the setting of our numerical experiments and exhibit then how the first- and second-order scattering data $\mathbf{g}_1, \mathbf{g}_2 \in \R^{P\times K}$ are simulated. Afterwards, we present the reconstruction results for the RESESOP and the DIP approach.

\begin{description}
    \item \textit{Image domain.} During our experiments the region $\Omega\subset \R^2$ to be scanned is a square of $30$ cm side-length and center at zero. \\
    
    \item \textit{Architecture of the CST scanner.} The detector space $\D$ is a sphere with radius $30$ cm and center at zero. At one half of this sphere $n_s=10$ sources are evenly positioned. For each source we evenly sample $80\%$ of the detector space at $n_d$ locations for the detectors, which is illustrated in Figure \ref{fig:source_detector_setup}.
$20\%$ are omitted because detectors close to the source will only receive a low signal. In total, we have $K=n_s\cdot n_d = 200$ source-detector tuples. \\

\begin{figure}[!htb]
    \centering
    \includegraphics[width=0.45\linewidth]{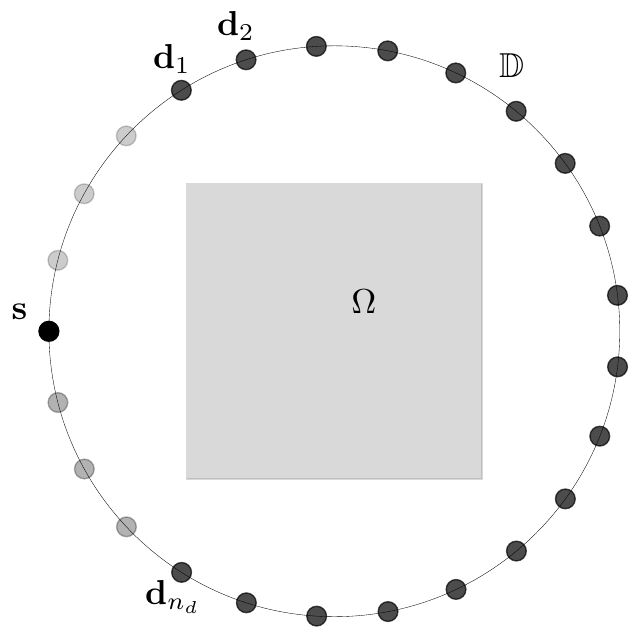}
    \caption{Set-up for first source and corresponding used detectors}
    \label{fig:source_detector_setup}
\end{figure}

    \item \textit{Sources and energy.} The monochromatic sources are assumed to emit $\gamma$-rays at energy $E_0=1173$ keV, which corresponds to the maximal peak of Cobalt-60. Moreover, the total number of emitted photons per source is set to $I_0= 8\cdot 10^8$. Further research and simulations shall take into account the polychromacy of the source as in \cite{KugerRigaud21} but our proposed method can be adapted to this physical aspect.
We also discard the backscattering, \textit{i.e.} scattering angles $\omega \in (\pi/2, \pi)$, as the flux of this part of the spectrum is rather low, thus heavier affected by noise, and further delivers a poorer information for fixed energy resolution, see \cite{Rigaud21}. Therefore,  accordingly with to the Compton formula (\ref{eq:Compton_formula}), we equally sample the energy space $\E$ at $P=80$ energies in the interval $(359.6, 1161.5)$ keV, so for scattering angles $\omega\in (0, \pi/2)$.\\

    \item \textit{A modified Shepp-Logan phantom.} For the groundtruth $\mu$ we consider a bilinear interpolator of a modified Shepp-Logan Phantom defined on a grid twice as fine as $\Omega_h$, so $\mu\notin \X_j$.
    The original Shepp-Logan phantom has a very low contrast in the inner part which is not suited for the level of model inexactness we consider. In order to still provide a challenge for the algorithms, we increased the contrast but not as much as for the "Modified Shepp-Logan" defined in MATLAB.
    The electron densities relative to water of $\mu$ are in the interval $[1.36, 5.66]$, see figure \ref{fig:groundtruth_prior} a). This means its maximal electron density corresponds to bone. Note that the electron density of water is $3.23\cdot 10^{23}$ electrons per cm$^{-3}$.
The horizontal and vertical diameters of the phantom are $19.5$ cm and $26$ cm, respectively. Regarding the prior $\mu^*$, see Figure \ref{fig:groundtruth_prior} b), we choose the same shape as for $\mu$, but set the relative electron density of the interior constantly to $0.67$.
Both $\mu$ and $\mu^*$ are positioned in $\Omega$ such that their center are at zero.\\

    \item \textit{Restricting the domain space.} The finite dimensional subspace $\X_j$ of $H_0^\alpha(\Omega)$ is constructed in the following way: On $\Omega=(-15,15)^2$, we consider a regular $100\times100$ grid $$\Omega_h:=\{ (x_n,y_m)=(-15 + nh, 15+mh)^T: n,m =0,...,99\}$$ for $h=0.3.$ For each grid point, $(x_n,y_m)\in \Omega_h$ we define a gaussian function via 
\begin{equation*}
    e_{nm}(x,y) = c_{nm}\cdot \textup{exp}\left(\frac{1}{2}\left(\frac{x-x_n}{0.5h}\right)^2 +\frac{1}{2}\left( \frac{y-y_m}{0.5h}\right)^2\right),
\end{equation*}
where $c_{nm}$ is chosen such that $\Vert e_{nm}\Vert_{L^2(\Omega)}=1.$ Each gaussian is truncated to the set $\Omega\cap B_r((x_n,y_m)),$ for $r:= 1.5h$.
By setting $\X_j$ as the linear span of the $e_{nm}$ we obtain a $10000$ dimensional space.\\

    \item \textit{Forward models.} For the implementation of the first-order CST operator $\opL_1$, we used the trapezoidal rule for computing the involved integrals. 
The first-order scattering data $\mathbf{g}_1\in \R^{P\times K}$ is then computed by evaluating the semi-discrete $\opL_1^\mu$ at $\mu$ for the respective detector-source-energy triples described above.
The second-order scattering data $\mathbf{g}_2\in \R^{P\times K}$ was generated using Monte-Carlo simulations \cite{Rigaud21}. Since "only" $I_0 = 8\cdot 10^8$ photons were sent by the source, the second-order scattered radiation is subject to noise due to the stochastic nature of the emission of photons.  
The data $\mathbf{g}_1,$ $\mathbf{g}_2$ and the sum $\mathbf{g}_1+\mathbf{g}_2$ for one source position are depicted in Figure \ref{fig:shepp_data}.\\

\begin{figure}[t]
     \centering
     \begin{subfigure}[]
         \centering
         \includegraphics[width=0.23\linewidth]{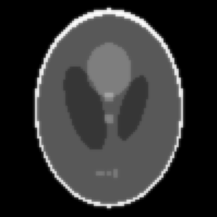}
     \end{subfigure}
     \begin{subfigure}[]
         \centering
         \includegraphics[width=0.23\linewidth]{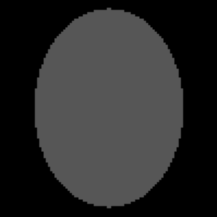}
     \end{subfigure}
     
     \begin{subfigure}[\label{fig:shepp_data}]
         \centering 
          \includegraphics[width=\linewidth]{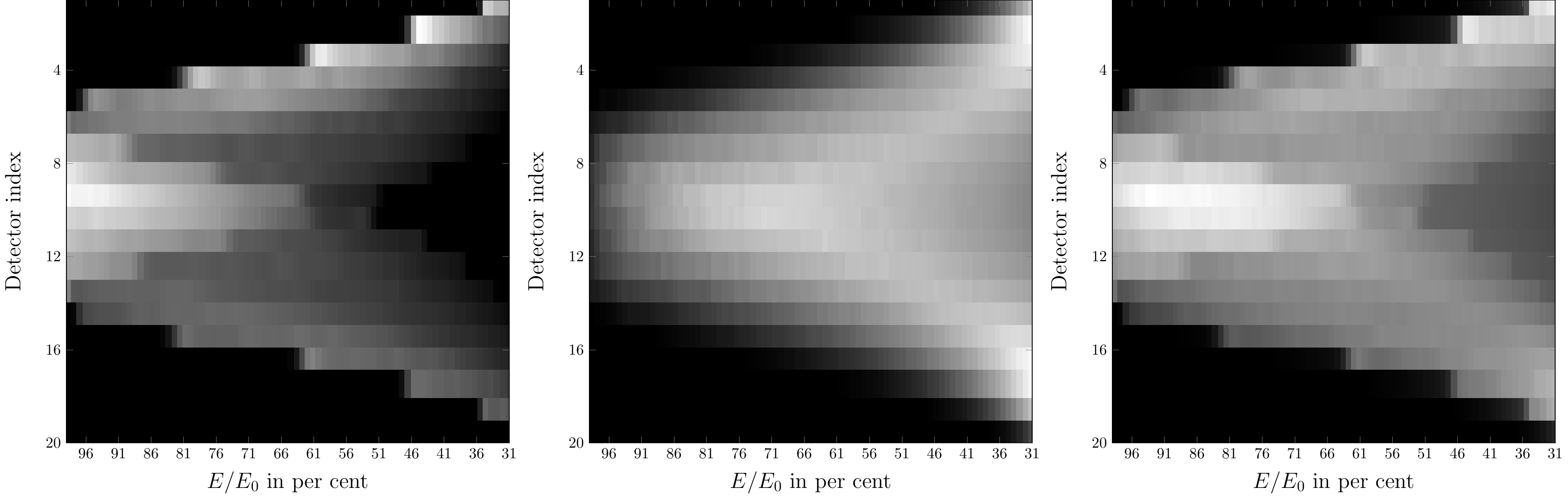}
    \end{subfigure}
     \caption{(a) Shepp-Logan phantom (groundtruth), (b) Prior map used for $\mu^*$, (c) CST data for one source position -- from left to right: $\mathbf{g}_1$, $\mathbf{g}_2$ and $\mathbf{g}_1+\mathbf{g}_2$.}
    \label{fig:groundtruth_prior}
\end{figure}

    \item \textit{Inexact model.} Regarding the inexact fully discrete operator $(\opL_1^{\mu^*})_j:\X_j\to \R^{P\times K}$, we compute its matrix representation $(P^{\mu^*}_1)_j$ - an $16000\times 10000$ matrix - with respect to the basis $(e_{nm})_{n,m}$ and the standard basis of $\R^{P\times K}$, that is, its columns are flattened versions of the vectors $\opL_1^{\mu^*}(e_{nm})\in \R^{P\times K}$. This allows a fast evaluation of the fully-discrete operator.
For computing the matrix entries, we pre-computed the weight $\opW_1(\mu^*)$ on a grid twice as fine as $\Omega_h$ and used linear interpolation. Thereby, the computation time was reduced.\\
    
    \item \textit{Different scenarios.} In our numerical experiments with RESESOP and DIP, we consider the following  reconstruction problems and use the notation introduced in the previous sections.
\begin{itemize}
    \item[\textbf{(i)}] $\opA_{p,k}^{\eta, j}f = (\mathbf{g}_1)_{p,k}$ for $p=1,\ldots P,$ $k=1,\ldots K$;
    \item[\textbf{(ii)}] $\opA_{p,k}^{\eta, j}f = (\mathbf{g}_1^\delta)_{p,k}$, where $\mathbf{g}_1^\delta$ is $\mathbf{g}_1$ disturbed by $2.4\%$ Poisson noise;
    \item[\textbf{(iii)}] $\opA_{p,k}^{\eta, j}f = (\mathbf{g}_1+\mathbf{g}_2)_{p,k}$ where $\mathbf{g}_2$ is corrupted by Poisson noise due to the Monte-Carlo process;
    \item[\textbf{(iv)}] $(\mathfrak{P} \opA)_{p,k}^{\eta^\mathfrak{P}, j}f = (\mathfrak{P}(\mathbf{g}_1+\mathbf{g}_2))_{p,k}$, where $\mathfrak{P}$ computes finite differences of the input vector.
\end{itemize}
\end{description}

\par 
For our reconstructions we need accurate estimations of the model uncertainty for every subproblem. We computed them numerically by inspecting the discrepancy between data generated by the exact and inexact forward operators.
Further, we use four different similarity measurements in Table \ref{tab:comparison-results_justg1},\ref{tab:comparison-results_justg1_noisy} and \ref{tab:comparison-results_g2} for comparing the different reconstructions, namely:
Signal-to-Noise ratio (SNR) \footnote[1]{SNR computed as defined in \url{https://github.com/scipy/scipy/blob/v0.16.0/scipy/stats/stats.py#L1963}}, peak signal-to-noise ratio (PSNR), structural self-similarity (SSIM) and normalized mean square error (NMSE).\\

\par 
In scenario \textbf{(i)}, that is dealing only with exact first-order scattering data $\mathbf{g}_1$, we present the outcome of six reconstruction methods. Three of them, Landweber with early stopping, Total Variation (TV) - see Remark \ref{rem:TV} - and the DIP approach with loss function $\ell_1$, do not take the model uncertainty into account and are depicted in Figure \ref{fig:g1_recos} (a), (c) and Figure \ref{fig:DIP_recos} (a), respectively. We observe that the overall contours of different tissues are well recognizable, which is expectable according to the results in \cite{Rigaud21}. However, the contrast between different tissues is badly reconstructed. 
In comparison to that, inspecting the RESESOP-Kaczmarz reconstruction in Figure \ref{fig:g1_recos} (e), the contrast between different tissues is much better retained. But a certain noisy pattern is noticeable in this reconstruction, which might be caused by considering multiple inverse problems instead of one. To deal with this problem one could e.g. do some post-processing with a suitable denoiser. Empirically, a good strategy was to include a few TV denoising steps after every $100$ RESESOP-Kaczmarz sweeps. By that we obtained satisfactory results, see Figure \ref{fig:DIP_recos} (g). Nevertheless, it is to be noted here, that it is probably not easy to prove convergence for a combination of RESESOP and a non-projective method like TV-denoising.
Finally, the DIP reconstruction with loss function $\ell_2$ is depicted in Figure \ref{fig:DIP_recos} (b) and looks pretty similar to the RESESOP reconstruction, which is not very surprising, as the discrepancy term $c_{p,k}$ in $\ell_2$ was chosen like in the discrepancy principle (\ref{eq:discrepancy_principle}) in the RESESOP Algorithm \ref{algo:resesop}.
The reconstruction errors are listed in Table  \ref{tab:comparison-results_justg1} and the best results were achieved by the the combination of RESESOP and TV-denoising, called RESESOP+TV in the following.

\begin{remark}\label{rem:TV}
Introduced in \cite{ROF}, the Total-Variation has become a standard for regularizing inverse problems in imaging and image processing and is solved by the primal-dual algorithm. Here, we considered the regularized TV introduced and analyzed in \cite{acar94}, \textit{i.e.}
for solving an inverse problem $\opB f = g^\delta$ we find a minimizer of 
\begin{equation*}
    \Vert \opB f - g^\delta\Vert_{L^2(\Omega)}^2 + \lambda \int_\Omega \sqrt{\vert \nabla f \vert^2 +\beta}, \qquad \beta>0.
\end{equation*}
The objective function is differentiable, so gradient methods can be used for deriving a solution. For the TV denoising step in RESESOP+TV, we chose $\opB$ to be the identity operator.
\end{remark}

In scenario \textbf{(ii)} for corrupted data $\mathbf{g}_1^\delta$ we observe that RESESOP, RESEOP+TV and DIP with loss function $\ell_2$ are able to handle both the model uncertainty and noise if good approximations of the noise-levels $\delta_{p,k}$ are known, see Figure \ref{fig:g1_recos} (f), (h) and Figure \ref{fig:DIP_recos} (c). However, the corresponding reconstruction errors in Table \ref{tab:comparison-results_justg1_noisy} are a bit worse than in case of noise-free data in Table \ref{tab:comparison-results_justg1}. Except for RESESOP+TV. Here the reconstruction remains good, even a slightly better, which might be because of a better choice by hand of the TV-denoising parameter.
In the Landweber and TV reconstructions in Figure \ref{fig:g1_recos} (b) and (d) some details are gone.\\

\par 
Incorporating the second-order scattering data $\mathbf{g}_2$ to $\mathbf{g}_1$ in scenario \textbf{(iii)} leads to a huge additional uncertainty, as the flux of $\mathbf{g}_2$ is almost as high as the flux of $\mathbf{g}_1$, that is $\Vert \mathbf{g}_2 \Vert_1\approx \Vert \mathbf{g}_1 \Vert_1$, where
\begin{equation*}
    \Vert x \Vert_1 := \sum_{p,k} \vert x_{p,k} \vert, \textup{ for } x\in \R^{P\times K}.
\end{equation*} 
In this case, we see that RESESOP is no longer capable to handle it, see Figure \ref{fig:g2_Pg2_recos} (d). Also TV in Figure \ref{fig:g2_Pg2_recos} (a) is no longer able to reconstruct the inner contours. 

Therefore, we applied a finite difference operator $\mathfrak{P}$ in scenario \textbf{(iv)} to both sides of the problem $\opA^{\eta, j} f = (\mathbf{g}_1+\mathbf{g}_2)$. This reduced the flux of both first- and second-order data, but more importantly, the latter decreased more: $\Vert \mathfrak{P} \mathbf{g}_2 \Vert\leq 0.44 \cdot \Vert \mathfrak{P} \mathbf{g}_1 \Vert$. Indeed, by this, RESESOP and RESESOP+TV lead to good reconstructions, see Figure \ref{fig:g2_Pg2_recos} (e) and (f). However, the contrast between different tissues is not as well reconstructed as in the case for just $\mathbf{g}_1$ data, see also Table \ref{tab:comparison-results_g2} for the reconstruction errors. The DIP approach with loss function $\ell_2$ in Figure \ref{fig:DIP_recos} (d) has some artifacts in form of scratches, but the contrast between tissues is better conserved, which results in comparable reconstructions errors to RESESOP and RESESOP+TV in Table \ref{tab:comparison-results_g2}.

\begin{remark}
As also the DIP reconstructions in Figure \ref{fig:DIP_recos} include some noisy pattern, we tried to add a further denoising penalty to the loss function $\ell_2$ in the DIP approach. Unsurprisingly, thereby the influence of the model uncertainty gets more visible again. Therefore, we propose to gain improvements rather by some post-processing, for example by TV denoising.
\end{remark}

\begin{remark}
The prior $\mu^*$ is very simple, so the model uncertainty is rather large. To decrease the model uncertainty, one could use one of the reconstructions as a new prior $\tilde{\mu}^*$ and consider $\opA^\eta = \opL_1^{\tilde{\mu}^*}$, which probably is a better approximation of $\opL_1^\mu$. Furthermore, a prior $\mathbf{g}_2^*$ could be included in the discrepancy term, both for RESESOP and DIP, in order to reduce the model uncertainty. We did not consider these improvements to stress the algorithms in terms of model uncertainty.
\end{remark}

\begin{table}[h]
	    \centering
	    \setlength{\tabcolsep}{3pt}
	    \renewcommand{\arraystretch}{1.1}
	\begin{tabular}{c|cccccc}
	     & Landweber & TV($\lambda=12$) & DIP $\ell_1$ & RESESOP & RESESOP+TV & DIP $\ell_2$ \\\hline
	SNR & 0.877 & 0.706 & 0.539 & 0.903 & \textbf{0.561} & 0.647 \\
	PSNR (dB) & 22.305 & 24.083 & 23.667 & 27.570 & \textbf{33.541} & 28.990 \\
	SSIM & 0.948 & 0.965 & 0.962 & 0.985 & \textbf{0.996} & 0.989 \\
	NMSE & 0.249 & 0.203 & 0.212 & 0.136 & \textbf{0.068} & 0.115 \\
	\end{tabular}
	\caption{Scenario \textbf{(i)}: Error measures for the different reconstructions and methods to solve $\opA^{\eta, j}f =\mathbf{g}_1$. }
	\label{tab:comparison-results_justg1}
\end{table}

\begin{table}[h]
	    \centering
	    \setlength{\tabcolsep}{3pt}
	    \renewcommand{\arraystretch}{1.1}
	\begin{tabular}{c|ccccc}
	     & Landweber & TV($\lambda=45$) &  RESESOP & RESESOP+TV & DIP $\ell_2$ \\\hline
	SNR & 0.891 & 0.616 & 0.941 & \textbf{0.602} & 0.632 \\
	PSNR (dB) & 21.502 & 22.772 & 26.651 & \textbf{33.978} & 31.804 \\
	SSIM & 0.937 & 0.953 & 0.982 & \textbf{0.997} & 0.995  \\
	NMSE & 0.273 & 0.236 & 0.151 & \textbf{0.065} & 0.083 \\
	\end{tabular}
	\caption{Scenario \textbf{(ii)}: Error measures for the different reconstructions and methods to solve $\opA^{\eta, j}f =\mathbf{g}_1^\delta$, i.e. data with $2.4\%$ Poisson noise. }
	\label{tab:comparison-results_justg1_noisy}
\end{table}

\begin{table}[h]
	    \centering
	    \setlength{\tabcolsep}{3pt}
	    \renewcommand{\arraystretch}{1.1}
	\begin{tabular}{c|ccccc}
	     & TV($\lambda=9$) & TV($\lambda=26$) & RESESOP & RESESOP+TV & DIP $\ell_2$\\\hline
	SNR & 0.967           & 0.904            & 0.815   & 0.659      & \textbf{0.545} \\
	PSNR (dB) & 20.657          & 21.056           & 23.788  & \textbf{25.804}     & 24.785 \\
	SSIM & 0.928           & 0.933            & 0.963   & \textbf{0.977}      & 0.972 \\
	NMSE & 0.300           & 0.287            & 0.210   & \textbf{0.166}      & 0.187
	\end{tabular}
	\caption{Scenario \textbf{(iv)}: Error measures for the different reconstructions and methods to solve $(\opP \opA)^{\eta^\mathfrak{P}, j}f = \opP(\mathbf{g}_1+\mathbf{g}_2)$. }
	\label{tab:comparison-results_g2}
\end{table}

\begin{figure}[!htb]
     \centering
     \begin{subfigure}[]
         \centering
         \includegraphics[width=0.23\linewidth]{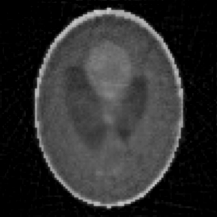}
     \end{subfigure}
     \begin{subfigure}[]
         \centering
         \includegraphics[width=0.23\linewidth]{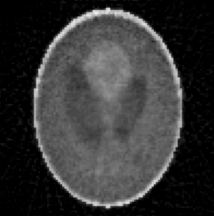}
     \end{subfigure}
     \begin{subfigure}[]
         \centering
         \includegraphics[width=0.23\linewidth]{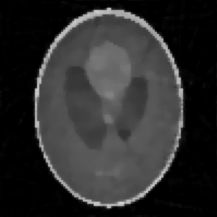}
     \end{subfigure}
     \begin{subfigure}[]
         \centering
         \includegraphics[width=0.23\linewidth]{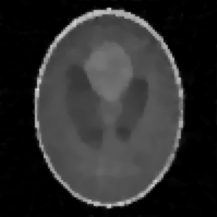}
     \end{subfigure}

     \begin{subfigure}[]
         \centering
         \includegraphics[width=0.23\linewidth]{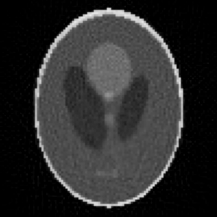}
     \end{subfigure}
     \begin{subfigure}[]
         \centering
         \includegraphics[width=0.23\linewidth]{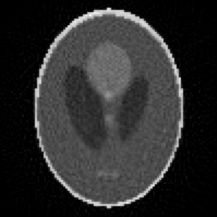}
     \end{subfigure}
     \begin{subfigure}[]
         \centering
         \includegraphics[width=0.23\linewidth]{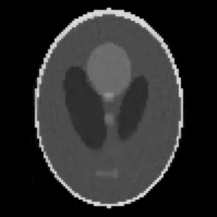}
     \end{subfigure}
     \begin{subfigure}[]
         \centering
         \includegraphics[width=0.23\linewidth]{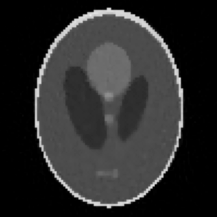}
     \end{subfigure}
     \caption{Reconstructions for  \\Scenario \textbf{(i)}: (a) Landweber, (c) TV, (e) RESESOP, (g) RESESOP+TV\\Scenario \textbf{(ii)}: (b) Landweber, (d) TV, (f) RESESOP, (h) RESESOP+TV\\}
    \label{fig:g1_recos}
\end{figure}

 \begin{figure}
     \centering
     \begin{subfigure}[]
         \centering
         \includegraphics[width=0.23\linewidth]{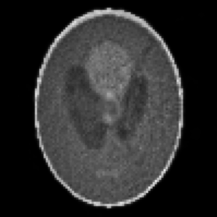}
     \end{subfigure}
     \begin{subfigure}[]
         \centering
         \includegraphics[width=0.23\linewidth]{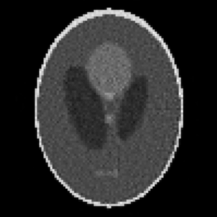}
     \end{subfigure}
     \begin{subfigure}[]
         \centering
         \includegraphics[width=0.23\linewidth]{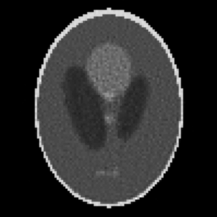}
     \end{subfigure}
          \begin{subfigure}[]
         \centering
         \includegraphics[width=0.23\linewidth]{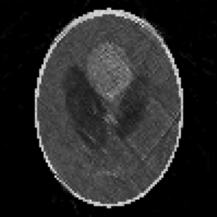}
     \end{subfigure}
     \caption{DIP reconstructions for\\ Scenario \textbf{(i)}: (a) with $\ell_l$, (b) with $\ell_2$\\ Scenario \textbf{(ii)}: (c) with $\ell_2$\\
     Scenario \textbf{(iv)}: (d)  with $\ell_2$\\}
    \label{fig:DIP_recos}
\end{figure}

\begin{figure}[!htb]
     \centering
     \begin{subfigure}[]
         \centering
         \includegraphics[width=0.23\linewidth]{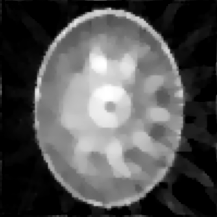}
     \end{subfigure}
     \begin{subfigure}[]
         \centering
         \includegraphics[width=0.23\linewidth]{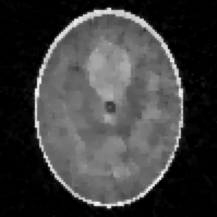}
     \end{subfigure}
     \begin{subfigure}[]
         \centering
         \includegraphics[width=0.23\linewidth]{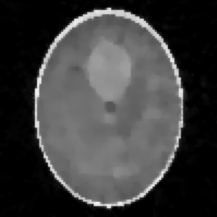}
     \end{subfigure}

     \begin{subfigure}[]
         \centering
         \includegraphics[width=0.23\linewidth]{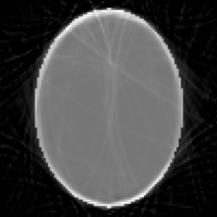}
     \end{subfigure}
     \begin{subfigure}[]
         \centering
         \includegraphics[width=0.23\linewidth]{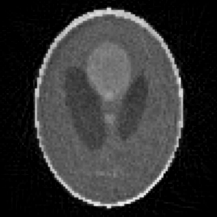}
     \end{subfigure}
     \begin{subfigure}[]
         \centering
         \includegraphics[width=0.23\linewidth]{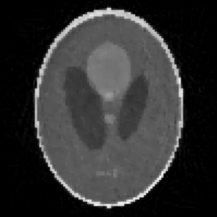}
     \end{subfigure}
     \caption{Reconstructions for \\Scenario \textbf{(iii)}: (a) TV, (d) RESESOP\\  Scenario \textbf{(iv)}: (b,c) TV ($\lambda=9$ and 26),
     (e) RESESOP, (f) RESESOP+TV}
    \label{fig:g2_Pg2_recos}
\end{figure}

\section{Conclusion}

We have proposed two data-driven reconstruction strategies able to handle the model uncertainty occurring in imaging based on Compton scattering. The construction of these algorithms is based on the study of the properties of the forward models: nonlinearity, mapping properties and model uncertainty. The first approach considers the RESESOP method which is studied in terms of convergence and regularization for the fully discrete case in order to fit the restrictions of our spectral inverse problem. The second approach exploits the popular DIP method, suited for the treated problem since unsupervised, it does not require a dataset. We modified the learning loss function using the model uncertainty model used in the first approach. Simulation results on synthetic data for the first-order scattering and on Monte-Carlo data for the second-order scattering attest the efficiency of both approaches. \\

\par The performed simulations assumed an almost perfect estimation of the model uncertainty for every subproblems which is hard to achieve in practice and remains an open issue for the general RESESOP approach or here for our modified DIP method. A first possibility would be to learn the model uncertainty coefficients from a synthetic dataset or from real dataset in the future. Another more general approach would be to relax the uncertainty parameter in the RESESOP method, for instance by incorporating a minimization problem at each iterate to find the best parameter. These questions will be the core of future research.
\section*{Acknowledgements}
This research was supported by the Deutsche Forschungsgemeinschaft (DFG) under the grant RI 2772/2-1 and the Cluster of Excellence EXC 2075 "Data-Integrated Simulation Science" at the University of Stuttgart.

\section*{References}

\bibliographystyle{siamplain}
\bibliography{references}

\end{document}